\theoremstyle{plain}
\newtheorem{theorem}{Theorem}[section]
\newtheorem{lemma}[theorem]{Lemma}
\newtheorem{proposition}[theorem]{Proposition}
\theoremstyle{definition}
\begin{document}

\title[Diagonalizable Thue inequalities with negative $\Delta$]{Diagonalizable Quartic Thue inequalities With Negative Discriminant}
\author{Christophe Dethier}
\address{Department of Mathematics, Deady Hall, Eugene, OR, 97403, USA}
\email{cdethier@uoregon.edu}
\maketitle

\begin{abstract}
The Thue-Siegel method is applied to derive an upper bound for the number of solutions to Thue's equation $F(x,y) = 1$ where $F$ is a quartic diagonalizable form with negative discriminant. Computation is used in this argument to handle forms whose discriminant is small in absolute value. We then apply our results to bound the number of integral points on a certain family of elliptic curves. 
\end{abstract}

\section{Introduction}

Thue proved in \cite{Thue1909} that if $F(x,y)$ is an irreducible binary form of degree at least 3 and $h \in \mathbb{Z}$ is nonzero, then the equation 
\[
F(x,y) = h
\]
has finitely many integer solutions $(x,y)$. Such equations are called Thue equations, and inequalities of the form 
\[
0 < |F(x,y)| \leq h
\]
are called Thue inequalities.

Although Thue proved finiteness, giving bounds on the sizes of the solutions or on the number of solutions is of particular interest. In this paper we pursue the latter. To do this, we use the Thue-Siegel method of approximating binomial functions using Pad\'{e} approximation. This method was developed by Thue, see for example \cite{Thue1918}. The approximating functions were identified by Siegel as hypergeometric functions in \cite{Siegel1937}. The specifics of our application of this method are derived from the work of Akhtari, Saradha, and Sharma found in \cite{Akhtari2010} and \cite{Akhtari2015}.

A primitive solution to Thue's equation or inequality is a solution $(x,y)$ for which $x \geq 0$ and $\text{gcd}(x,y) = 1$. Throughout this paper we only count primitive solutions.

Suppose that $F$ is an integral binary quartic form. The linear group $\text{GL}_2(\mathbb{Z})$ acts on the collection of such quartic forms via linear coordinate substitution. A quartic invariant is a function of the coefficients of $F$ which is invariant under the sub-action of $\text{SL}_2(\mathbb{Z})$. The quartic invariants form a ring which is generated by the algebraically independent invariants
\begin{equation*}
I_F = a_2^2 - 3a_1a_3 + 12a_0a_4
\end{equation*}
and
\begin{equation*}
J_F = 2a_2^3 - 9a_1a_2a_3 + 27a_1^2a_4 - 72a_0a_2a_4 + 27a_0a_3^3
\end{equation*}
of weight 4 and 6 respectively. In particular, the discriminant $\Delta$ of $F$ can be expressed as
\begin{equation*}
27\Delta_F = 4I_F^3 - J_F^2.
\end{equation*}

A binary form $F$ of degree $n$ is called diagonalizable if $F$ satisfies
\begin{equation} \label{genericdiagonalization} 
F(x,y) = u^n(x,y) - v^n(x,y)
\end{equation}
where
\[
u(x,y) = \alpha x + \beta y \quad \text{ and } \quad v(x,y) = \gamma x + \delta y
\]
with
\[
j := \alpha \delta - \beta \gamma \neq 0.
\]
Furthermore, there is a constant $\chi$ such that
\[
(\alpha x + \beta y)(\gamma x + \delta y) = \chi(Ax^2 + Bxy + Cy^2)
\]
where $A$, $B$, and $C$ are integral. We use $D$ to denote the discriminant of the binary quadratic form $Ax^2 + Bxy + Cy^2$.

We also note that for a diagonalizable quartic form,
\[
\Delta_F = -4^4 j^{12},
\]
see equation (17) from \cite{Akhtari2015} for this identity with general $n$. In the case of quartic forms, diagonalizability is equivalent to $J_F = 0$, see page 29 of \cite{Olver2003} for example, or \cite{Akhtari2010} for a more explicit treatment. If the Hessian $H_F$ of $F$ is written
\[
H_F := \frac{\partial^2 F}{\partial x^2} \frac{\partial^2 F}{\partial y^2} - \left(\frac{\partial^2 F}{\partial x \partial y}\right)^2 = A_0 x^4 + A_1x^3y + A_2x^2 y^2 + A_3 xy^3 + A_4 y^4,
\]
then in \cite{Akhtari2010} it was shown that
\begin{equation}\label{xieta}
F(x,y) = \frac{1}{8\sqrt{3I_FA_4}}\left(\xi^4(x,y) - \eta^4(x,y)\right),
\end{equation}
where $\xi^4$ and $\eta^4$ have coefficients in $\mathbb{Q}\left(\sqrt{A_0I_F/3}\right)$. Furthermore if $I_F > 0$ then $\xi$ and $\eta$ are complex conjugates.

Akhtari applied the Thue-Siegel method in \cite{Akhtari2010} to show that $|F(x,y)| = 1$ has at most 12 solutions when $F$ is a diagonalizable quartic form with positive discriminant. In \cite{Akhtariellptic}, Akhtari gives further results concerning the diagonalizable case with positive discriminant. In \cite{Siegel1970}, Siegel shows that $0 < |F(x,y)| \leq h$, where $F$ is diagonalizable quartic with negative discriminant, has at most 16 solutions when $D < 0$, at most 8 solutions when $D > 0$ and $F$ is indefinite, and at most 1 solution when $D > 0$ and $F$ is definite, all provided that $|\Delta_F| > 2^{59} h^{13}$. 

Akhtari, Saradha, and Sharma applied similar methods in \cite{Akhtari2015} to give similar bounds on the number of solutions to $|F(x,y)| = 1$ when $F$ is diagonalizable of degree at least five. Quartic Thue inequalities have been studied by others, notably Wakabayashi in \cite{Wakabayashi1997} and \cite{Wakabayashi2000}. 

This paper concerns the case when $F$ is diagionalizable quartic with negative discriminant using the methods of \cite{Akhtari2010} and \cite{Akhtari2015}. Using gap principles from \cite{Akhtari2015} we prove that $0 < |F(x,y)| \leq h$ has at most $2k$ solutions under roughly the condition $h \ll_k 2^{-10/7}|j|^{10/7}$.

\begin{theorem} \label{inequalitythmgenk}
Let $F$ be a diagonalizable quartic form with negative discriminant, and $k$ an integer satisfying $k \geq 3$. Suppose that $h < \frac{1}{4}|j|^{2}$ and $h < C_2(2,k,0)|j|^{E_2(2,k,0)}$, where
\[
E_2(2,k,0) = \frac{110\cdot 3^k - 1278}{77\cdot 3^k + 378}
\]
and $C_2(2,k,0) = 2^{\Theta}$, where
\[
\Theta = \frac{108\log_2(3) - 6066-110\cdot 3^k}{378 + 77\cdot 3^k}.
\]
Given these assumptions on $h$, the Thue inequality $0 < |F(x,y)| \leq h$ has at most $2k$ primitive solutions.
\end{theorem}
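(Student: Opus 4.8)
The plan is to run the Thue--Siegel method separately at each of the two real roots of $F$, bounding the number of solutions clustering near each root by playing an effective approximation measure against a gap principle. Because $F$ is diagonalizable with negative discriminant, the diagonalization \eqref{genericdiagonalization} exhibits the factorization $u^4 - v^4 = (u-v)(u+v)(u^2+v^2)$, and $\Delta_F < 0$ forces $F$ to have exactly two real roots $\theta_1, \theta_2$ together with a conjugate pair of complex roots; over $\mathbb{R}$ we may thus write $F(x,y) = c\,(x-\theta_1 y)(x-\theta_2 y)\,Q(x,y)$ with $Q$ an irreducible, hence definite, real quadratic. First I would show that away from $\theta_1$ and $\theta_2$ the form $F$ grows, so that any primitive solution of $0 < |F(x,y)| \le h$ with $|y|$ not small must have $x/y$ close to one of $\theta_1, \theta_2$; the hypothesis $h < \tfrac14 |j|^2$ is used to guarantee that each remaining solution is genuinely close to a single root and to confine the finitely many small solutions to a controllable list. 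This partitions the solutions into two clusters.

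Next I would establish the gap principle of \cite{Akhtari2015} in the present setting. If $(x_1,y_1), (x_2,y_2), \dots$ are the solutions attached to a fixed real root $\theta_i$, ordered by increasing height, then near $\theta_i$ two of the factors of $F$ are bounded below while $|x - \theta_i y|$ is forced small, yielding a superlinear growth relation of the shape $|y_{m+1}| \gg |y_m|^{c}$ with an explicit $c > 1$. Iterating this relation $k$ times gives a lower bound on the height of the $k$-th solution that grows like a power of $|j|$ with a $3^k$ in the exponent; this iteration is the origin of the $3^k$ appearing in $E_2(2,k,0)$ and in $\Theta$.

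The core of the proof is the Thue--Siegel inequality coming from Pad\'{e} approximation. Using the hypergeometric polynomials that solve the Pad\'{e} problem for the binomial function attached to the fourth-power structure of $F$ in \eqref{genericdiagonalization}, and using the representation \eqref{xieta} to locate the relevant algebraic data in $\mathbb{Q}(\sqrt{A_0 I_F/3})$, I would construct for each real root a sequence of good rational approximations with controlled numerator, denominator, and remainder, and convert these into an effective lower bound $|\theta_i - x/y| \gg |y|^{-\lambda}$ with $\lambda < 2$, the dependence on $j$ entering through $\Delta_F = -4^4 j^{12}$. A solution near $\theta_i$ is by construction an unusually good approximation, so this lower bound caps its height from above, while the iterated gap principle bounds its height from below after $k$ steps. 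These two bounds are compatible for at most $k$ solutions at each root precisely when $h < C_2(2,k,0)\,|j|^{E_2(2,k,0)}$ with the stated $\Theta$, and summing over the two real roots gives at most $2k$ primitive solutions. The hypothesis $k \ge 3$ is what makes the iterated gap exponent dominate and keeps $E_2(2,k,0)$ and $\Theta$ in the admissible range.

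I expect the Pad\'{e}/hypergeometric step to be the main obstacle: one must build approximants whose remainder is small while the numerator and denominator stay small, and then propagate every constant cleanly through the factorization $u^4 - v^4$ and the identity $\Delta_F = -4^4 j^{12}$ so as to recover the exact exponents $\frac{110\cdot 3^k - 1278}{77\cdot 3^k + 378}$ and $\Theta = \frac{108\log_2(3) - 6066 - 110\cdot 3^k}{378 + 77\cdot 3^k}$ rather than merely asymptotically correct ones. A secondary difficulty is the bookkeeping of the borderline medium-sized solutions so that the per-root count is exactly $k$, with no stray additive constant and no solution double-counted between the two clusters.
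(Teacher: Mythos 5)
Your high-level plan — split the solutions into two clusters attached to the two real roots, play a gap principle against a Pad\'{e}/hypergeometric approximation bound, and let the $3^k$ come out of iterating the gap — is the right family of ideas and matches the paper's framework in outline. But as written the proposal has a genuine gap at its core step, and the form in which you propose to carry out that step is not one that actually works here. You plan to extract a uniform effective irrationality measure $|\theta_i - x/y| \gg |y|^{-\lambda}$ with $\lambda < 2$ and then cap heights from above. The hypergeometric method does not deliver such a uniform measure at this level of generality under the stated hypotheses on $h$ and $j$; what it delivers (and what the paper uses, via Lemma~\ref{interpolationlemmal} imported from \cite{Akhtari2015}) is a \emph{pairwise} interpolation inequality relating two solutions $Z_1, Z_2$ in the same class, valid for every Pad\'{e} degree $n$ provided the auxiliary form $\Sigma_{n,g}$ does not vanish. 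The paper's argument then assumes $k$ solutions remain in a class after excluding one, applies the interpolation inequality to the last two, and proves by induction on $n$ that $Z_k \gg_n Z_{k-1}^{4n}$ with constants controlled by the iterated gap principle $Z_{k-1} \geq |j|^{a_1(k-1)}2^{-a_1(k-1)}h^{-a_2(k-1)}$; letting $n \to \infty$ gives the contradiction. The exact exponents $E_2(2,k,0)$ and $\Theta$ arise from optimizing which of the resulting constraints on $h$ is binding, not from a $\lambda<2$ threshold. Your proposal defers exactly this to "the main obstacle," so the quantitative heart of the theorem is not established.

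Two further points you flag as secondary are in fact load-bearing and unresolved in your sketch. First, the count of "exactly $k$ per root with no stray additive constant" is obtained in the paper by a specific device: in each class one excludes the single solution of largest $\zeta = |F|/Z^4$, shows (using $h < \tfrac14|j|^2$, Lemma~\ref{zetabound}) that every other solution has $\zeta < 1$, and only then do the gap principles \eqref{gapprincipleii}--\eqref{gapprincipleiii} apply; the bound is $(k-1)+1$ per class. Without this exclusion the gap principle fails for the extremal solution and your per-cluster count does not close. Second, the Pad\'{e} construction can degenerate: one needs the non-vanishing statement (Lemma~\ref{interpolationjustification}, that $\Sigma_{n,0}$ and $\Sigma_{n+I,1}$ cannot both vanish) to guarantee that for each $n$ some admissible $g \in \{0,1\}$ gives a usable inequality. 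Your proposal does not mention this, and it is precisely why the constants carry a parameter $g$ and why the final bound is taken over both values of $g$.
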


We refer to the exposition preceeding Lemma~\ref{ineqinduction} for the complete definition of $C_2(n,k,g)$ and $E_2(n,k,g)$ where $n$, $k$, and $g$ are integers satisfying $n \geq 2$, $k \geq 3$, and $g = 0,1$.

Applying Theorem~\ref{inequalitythmgenk} in the case when $h = 1$ and $k = 4$ yields the following:

\begin{theorem} \label{EqualToOneReduction}
Let $F$ be a diagonalizable binary quartic form with negative discriminant. The equation $|F(x,y)| = 1$ has at most eight primitive solutions.
\end{theorem}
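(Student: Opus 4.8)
The plan is to specialize Theorem~\ref{inequalitythmgenk} to the parameters $h = 1$ and $k = 4$, which yields the bound $2k = 8$, and then to verify that for all but finitely many forms the hypotheses of that theorem hold, treating the exceptional forms by direct computation. Setting $k = 4$ gives $3^k = 81$, so I would first evaluate the two constants explicitly, obtaining
\[
E_2(2,4,0) = \frac{110\cdot 81 - 1278}{77\cdot 81 + 378} = \frac{7632}{6615} \approx 1.154
\]
and
\[
\Theta = \frac{108\log_2(3) - 6066 - 110\cdot 81}{378 + 77\cdot 81} \approx -2.238,
\]
so that $C_2(2,4,0) = 2^{\Theta} \approx 0.21$. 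With $h = 1$ the two hypotheses $h < \tfrac14|j|^{2}$ and $h < C_2(2,4,0)|j|^{E_2(2,4,0)}$ read $|j| > 2$ and $|j| > C_2(2,4,0)^{-1/E_2(2,4,0)} \approx 3.84$, and the latter is the binding constraint. Thus Theorem~\ref{inequalitythmgenk} immediately furnishes the desired bound of eight primitive solutions for every diagonalizable quartic $F$ of negative discriminant with $|j|$ exceeding roughly $3.84$.

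It then remains to dispose of the forms with small $|j|$. Translating through the identity $\Delta_F = -4^4 j^{12}$, the range $|j| \lesssim 3.84$ corresponds to $|\Delta_F|$ bounded by an explicit constant (on the order of $2.6\times 10^{9}$), so only finitely many discriminants occur. Here I would invoke two invariances: both the number of primitive solutions of $|F(x,y)| = 1$ and the discriminant $\Delta_F$ are unchanged when $F$ is replaced by a $\mathrm{GL}_2(\mathbb{Z})$-equivalent form. Since each bounded discriminant supports only finitely many equivalence classes of integral binary quartics, by the standard reduction theory, the small-$|j|$ case reduces to checking a finite, explicitly enumerable list of reduced representatives.

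For each such representative I would verify the eight-solution bound computationally. This requires an effective height bound on the solutions of $|F(x,y)| = 1$, for instance via the factorization in~\eqref{xieta}, which writes $F$ as a constant multiple of $\xi^4 - \eta^4$ and forces $\xi/\eta$ to lie near a fixed algebraic number whenever $|F|$ is small; once a box containing all candidate solutions is produced, the finitely many $(x,y)$ inside it can simply be listed and tested.

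I expect this finite verification to be the principal obstacle. One must produce a complete and certified list of reduced forms for every admissible discriminant up to about $2.6\times 10^{9}$, secure a solution bound small enough to make the enumeration feasible, and take particular care with the borderline forms for which the analytic criterion only narrowly fails. By comparison, the analytic input from Theorem~\ref{inequalitythmgenk} is essentially immediate; the real work lies in making the computational check exhaustive and rigorous.
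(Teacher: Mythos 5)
Your proposal is correct and follows essentially the same route as the paper: apply Theorem~\ref{inequalitythmgenk} with $h=1$, $k=4$ (your numerical threshold $|j| \gtrsim 3.84$, equivalently $|\Delta_F| \lesssim 2.6\times 10^9$, matches the paper's cutoff $I_F < -2593$ via $27\Delta_F = 4I_F^3$), then dispose of the finitely many remaining forms by explicit enumeration and computation. The only differences are implementational: the paper organizes the finite check by the invariant $I_F$ (which, since $J_F=0$, determines $\Delta_F$ and ranges only down to about $-2600$) using Cremona's seminvariant-based algorithm, taking care to separately handle forms with vanishing leading coefficient, and solves the residual Thue equations with PARI rather than with hand-rolled height bounds.
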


Our method of proof for Theorem~\ref{EqualToOneReduction} is to use Theorem~\ref{inequalitythmgenk} when $h = 1$. However this does not apply to forms with small $|\Delta|$, so we compute the solutions to $|F(x,y)| = 1$ for the remaining forms. Using $k = 4$ instead of $k = 3$ results in a more feasible computational problem. We refer the reader to Section \ref{CremonaProof} for the details of the computational methods used and some remarks on the results of these computations.

Diagonalizable forms are useful because if one can give an upper bound on the number of solutions to the Thue equation
\[
|F(x,y)| = 1,
\]
when $F$ is diagonalizable, then one can give an upper bound on the number of solutions to the equation
\[
|G(x,y)| = h
\]
when $G$ is diagonalizable using a reduction of Bombieri and Schmidt found in \cite{Bombieri1987}. See Proposition~\ref{BombieriSchmidt} for our specific version of this. If given a diagonal form, that is one of type
\[
G(x,y) = ax^n - by^n,
\]
the Bombieri-Schmidt reduction will not necessarily return diagonal forms, but will return diagonalizable forms.

Applying the Bombieri-Schmidt reduction to Theorem \ref{EqualToOneReduction} gives the following result:

\begin{theorem} \label{EqualhResult}
Let $G$ be a diagonalizable quartic form with negative discriminant. Then $|G(x,y)| = h$ has at most $8 \cdot 4^{\omega(h)}$ primitive solutions.
\end{theorem}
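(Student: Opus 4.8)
The plan is to deduce Theorem~\ref{EqualhResult} from Theorem~\ref{EqualToOneReduction} via the Bombieri--Schmidt reduction, which is invoked as Proposition~\ref{BombieriSchmidt} in the excerpt. The central idea of this reduction is that counting primitive solutions of $|G(x,y)| = h$ can be reformulated in terms of counting solutions of finitely many equations of the shape $|F(x,y)| = 1$. Concretely, for a primitive solution $(x_0,y_0)$ of $|G(x,y)|=h$, one has $G(x_0,y_0) = \pm h$, and by primitivity one can factor the behavior of $G$ at $(x_0,y_0)$ through the residue class of $(x_0,y_0)$ modulo $h$. Each such admissible residue class gives rise to a form $F$ that is $\mathrm{GL}_2(\mathbb{Z})$-equivalent to $G$ up to the substitution encoding that class, with $|F(x,y)| = 1$ at the corresponding point. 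The count of such classes is bounded by $4^{\omega(h)}$, where $\omega(h)$ is the number of distinct prime divisors of $h$; this factor is exactly the content of the Bombieri--Schmidt argument (each prime power dividing $h$ contributes a bounded number of relevant classes, and the $4$ reflects the quartic degree).

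First I would invoke Proposition~\ref{BombieriSchmidt} to produce, from $G$, a collection of at most $4^{\omega(h)}$ forms $F_1, \dots, F_m$ such that every primitive solution of $|G(x,y)| = h$ corresponds to a primitive solution of $|F_i(x,y)| = 1$ for some $i$, and such that the correspondence does not overcount. The key structural point to verify is that each $F_i$ produced by the reduction remains a diagonalizable quartic form with negative discriminant, so that Theorem~\ref{EqualToOneReduction} applies to it. This is where the hypothesis that $G$ is diagonalizable is essential: the excerpt notes explicitly that the Bombieri--Schmidt reduction preserves diagonalizability (though not diagonality), and since the reduction acts by $\mathrm{GL}_2(\mathbb{Z})$ coordinate substitutions composed with scaling, and the discriminant transforms by $\Delta_{F_i} = -4^4 j_i^{12}$ with $j_i \neq 0$, negativity of the discriminant is automatically inherited.

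Next I would apply Theorem~\ref{EqualToOneReduction} to each $F_i$, obtaining at most eight primitive solutions of $|F_i(x,y)| = 1$ for each $i$. Summing over the at most $4^{\omega(h)}$ forms yields the bound $8 \cdot 4^{\omega(h)}$ on the number of primitive solutions of $|G(x,y)| = h$. I would take care that the reduction assigns each primitive solution of $|G(x,y)|=h$ to at least one $F_i$, so that the union bound is valid and no genuine solution escapes the count.

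The main obstacle I anticipate is purely bookkeeping rather than conceptual: ensuring that the precise form of the Bombieri--Schmidt reduction stated in Proposition~\ref{BombieriSchmidt} yields exactly the constant $4^{\omega(h)}$ (and not some larger multiple), and that the diagonalizability and negative-discriminant hypotheses genuinely transfer to every auxiliary form. Since the quantitative statement of the reduction is assumed as Proposition~\ref{BombieriSchmidt}, the proof reduces to correctly matching its conclusion to the hypotheses of Theorem~\ref{EqualToOneReduction} and multiplying the two bounds; the argument is therefore short, with all the real work already absorbed into the cited proposition and the degree-four equation count.
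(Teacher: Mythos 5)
Your proposal is correct and follows exactly the paper's route: the paper obtains this theorem by combining Proposition~\ref{BombieriSchmidt} (with $\mathfrak{N}=8$ supplied by Theorem~\ref{EqualToOneReduction}) and the observation, proved there, that the auxiliary forms produced by the reduction retain $J_F=0$ and $I_F<0$. The only point worth noting is the mild mismatch between the proposition's statement for $G(x,y)=h$ and the theorem's $|G(x,y)|=h$, which both you and the paper treat as immaterial.
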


We finish this paper by applying this result to give an upper bound on the number of integral points on the elliptic curve
\begin{equation} \label{ellipticcurve}
Y^2 = X^3 + NX
\end{equation}
where $N$ is a positive integer. We use the reduction found in \cite{Tzanakis1986}. In that paper, Tzanakis uses norm-form equations to give a method of finding the integral points on~\eqref{ellipticcurve} but does not give an explicit upper bound on the number of such points. Tzanakis also gives a reduction for the same family of elliptic curves with $N$ a negative integer (corresponding to a positive discriminant of the resulting forms), which Akhtari applied in \cite{Akhtariellptic} using the results from \cite{Akhtari2010}. We have shown the following result using these methods:

\begin{theorem} \label{EllipticCurveResult}
Let $N$ be a positive square-free integer. The equation~\eqref{ellipticcurve} has at most
\[
2^{15/2}\sqrt{N}\sum_{d|N}\frac{2^{\omega(N/d)}\epsilon_d^{3/2}}{d}
\]
integral points, where $\epsilon_d$ is a minimal unit in the ring $\mathbb{Z}[\sqrt{d}]$.
\end{theorem}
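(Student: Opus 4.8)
The plan is to reduce the integral points to a finite family of diagonalizable quartic Thue equations by the method of Tzanakis \cite{Tzanakis1986}, and then to apply Theorem~\ref{EqualToOneReduction}. First I would record that every integral point has $X \ge 0$, since $Y^2 = X(X^2 + N)$ together with $X^2 + N > 0$ forces $X \ge 0$, and that $X = 0$ gives only the point $(0,0)$. For $X > 0$, set $d = \gcd(X, X^2 + N) = \gcd(X, N)$; as $N$ is square-free we may write $N = dM$ with $\gcd(d,M) = 1$, and then $X = dX'$, $Y = dY'$ with
\[
Y'^2 = X'(dX'^2 + M), \qquad \gcd(X', dX'^2 + M) = \gcd(X', M) = 1.
\]
The two coprime nonnegative factors on the right must each be squares, say $X' = s^2$ and $dX'^2 + M = t^2$, so that every nontrivial point produces, for some $d \mid N$, a solution of
\[
t^2 - d s^4 = M, \qquad M = N/d,
\]
with $X = ds^2$ and $Y = dst$. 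Conversely each such $(s,t)$ yields boundedly many points, so it suffices to bound, summed over $d \mid N$, the number of solutions of these equations.

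Next I would work in the real quadratic field $\mathbb{Q}(\sqrt d)$. The factorization $(t - s^2\sqrt d)(t + s^2\sqrt d) = M$ shows that $\mu := t - s^2\sqrt d$ has $\Norm_{\mathbb{Q}(\sqrt d)/\mathbb{Q}}(\mu) = M$. Since $M$ is square-free there are at most $2^{\omega(M)} = 2^{\omega(N/d)}$ ideals of norm $M$, and modulo the unit group $\langle -1, \epsilon_d\rangle$ the elements of norm $M$ fall into one orbit for each principal such ideal. Writing the unit exponent as $2q + r$ with $r \in \{0,1\}$ and taking the coordinates of $\epsilon_d^{q}$ as the pair of free variables, the condition that the $\sqrt d$-coefficient of $\mu$ be the square $-s^2$ becomes, following \cite{Tzanakis1986}, a quartic Thue equation $|G(x,y)| = 1$. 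Each $G$ is a difference of fourth powers of the two real linear forms attached to $\mathbb{Q}(\sqrt d)$, hence diagonalizable in the sense of \eqref{genericdiagonalization}; because these forms are real, $G = (u - v)(u + v)(u^2 + v^2)$ has exactly two real and two complex roots, so its discriminant is negative and Theorem~\ref{EqualToOneReduction} bounds the solutions of each such equation by eight.

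It then remains to count the Thue equations. For each $d \mid N$ the ideals of norm $N/d$ account for the factor $2^{\omega(N/d)}$. For each such ideal I would enumerate the unit-orbit representatives producing a form $G$ that satisfies the hypotheses of Theorem~\ref{inequalitythmgenk}, in particular $1 < \frac{1}{4}|j|^2$; since the parameter $|j|$ of each form is controlled by the size of the chosen representative and of $\epsilon_d$, a height estimate in $\mathbb{Z}[\sqrt d]$ bounds the number of admissible representatives, and carrying the numerical constants through this estimate is what produces the factors $\sqrt N$, $\epsilon_d^{3/2}$, $d^{-1}$, and $2^{15/2}$. Multiplying the number of forms by the bound eight, summing over $d \mid N$, and adding the $O(1)$ exceptional points then yields
\[
2^{15/2}\sqrt N \sum_{d \mid N} \frac{2^{\omega(N/d)}\epsilon_d^{3/2}}{d}.
\]

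The step I expect to be the main obstacle is this final enumeration with sharp constants: bounding the number of Thue equations the reduction produces by a quantity of order $\epsilon_d^{3/2}\sqrt N / d$ per ideal class, while simultaneously checking that each resulting form meets the hypotheses of Theorem~\ref{inequalitythmgenk}, so that the clean bound of eight solutions is actually available. This demands precise control of the relationship between a form's parameter $j$, the fundamental unit $\epsilon_d$, and the norm $N/d$, together with a separate and likely computational treatment of any forms whose discriminant is too small for the Thue--Siegel bound to apply, in the spirit of Section~\ref{CremonaProof}.
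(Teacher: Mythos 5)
Your reduction of an integral point to the equations $t^2 - ds^4 = N/d$ for $d \mid N$, and your use of the $2^{\omega(N/d)}$ classes of solutions of the associated Pell-type equation, match the paper's first two steps (the paper cites Walsh for the class count). But from that point on your proposal diverges into a structure that is not established and does not reflect how the stated constants actually arise. The paper does \emph{not} produce, for each class, a family of Thue equations with right-hand side $1$ whose cardinality is of order $\epsilon_d^{3/2}\sqrt{N}/d$. Instead, Tzanakis's parametrization lemma (Lemma~\ref{reductionlemma}) converts \emph{each class} into a \emph{single} quartic Thue equation $F(u,v) = (Pt/Q)^2$, where the right-hand side is generally much larger than $1$ and depends on the fundamental solution of the class. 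The bound on the number of its solutions then comes from Theorem~\ref{EqualhResult}, i.e.\ the Bombieri--Schmidt reduction giving $8\cdot 4^{\omega(h)}$ with $h = (Pt/Q)^2$, combined with Akhtari's explicit estimate $\omega(P^2t^2/Q^2) \leq 2 + \log\bigl(\epsilon_d^{3/2}\sqrt{|K|/2d}\bigr)/\log 4$. It is this estimate, exponentiated inside $4^{\omega(h)}$, that produces the factors $\epsilon_d^{3/2}$, $\sqrt{N}$, $d^{-1}$, and the power of $2$ --- not a count of unit-orbit representatives.

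The step you flag as ``the main obstacle'' is therefore not merely the hard part of your plan; it is a genuinely different claim that you have not justified and that I do not see how to justify. You assert that the condition that the $\sqrt d$-coefficient of $\mu$ be a square ``becomes a quartic Thue equation $|G(x,y)|=1$,'' but writing the unit exponent as $2q+r$ and expanding gives a ternary quadratic condition in the coordinates of $\epsilon_d^{q}$ (equation~\eqref{3reduced} in the paper), and the passage from that ternary quadratic to a binary quartic form necessarily introduces the nontrivial right-hand side $(Pt/Q)^2$. Without the Bombieri--Schmidt step you have no tool to handle that right-hand side, and without Akhtari's bound on $\omega(P^2t^2/Q^2)$ you have no route to the quantitative factor $\epsilon_d^{3/2}\sqrt{N}/d$. (Your worry about forms failing the hypotheses of Theorem~\ref{inequalitythmgenk} is, by contrast, a non-issue: Theorem~\ref{EqualToOneReduction} already covers all diagonalizable quartics with negative discriminant, the small-discriminant ones by computation.) To repair the argument you should replace your final enumeration with the chain: one Thue equation $F(u,v)=(Pt/Q)^2$ per class, the bound $8\cdot 4^{\omega((Pt/Q)^2)}$ from Theorem~\ref{EqualhResult}, and the cited estimate for $\omega(P^2t^2/Q^2)$.
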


Reducing questions about integral points on an elliptic curve to solving a number of quartic Thue equations is a classical idea. See \cite{BG2015} for a recent computational example which uses the correspondence between integral points on a Mordell curve and the solutions to certain cubic Thue equations.

\section{Gap Principles}

Suppose that $F$ is a binary quartic form with fixed resolvent forms $\xi$ and $\eta$ which satisfy~\eqref{xieta} and have coefficients in $\mathbb{Q}\left(\sqrt{3I_FA_4}\right)$. There are multiple choices for $\xi$ and $\eta$, for example if $\xi, \eta$ is one choice then $-\xi, i\eta$ is another. For the remainder of this paper we fix a pair with real coefficients and define the corresponding scaled forms $u$ and $v$ so that~\eqref{genericdiagonalization} and~\eqref{xieta} both hold. Again there are multiple ways to do this, so we fix a pair $u$ and $v$ with real coefficients.

We define
\[
Z = Z(x,y) = \max\{|u(x,y)|, |v(x,y)|\}.
\]
and
\[
\zeta = \zeta(x,y) = \dfrac{|F(x,y)|}{Z^4(x,y)}.
\]
When we are considering multiple solutions $(x_i,y_i)$ indexed by $i$, for convenience we will frequently use the notation $\zeta_i = \zeta(x_i,y_i)$, $Z_i = Z(x_i,y_i)$, $\xi_i = \xi(x_i,y_i)$, etc. Furthermore, we will denote the solution to the inequality $0 < |F(x,y)| \leq h$ for which $\zeta$ is largest by $(x_0,y_0)$. We also treat $(x,y)$ and $(-x,-y)$ as the same solution, because $Z$ only depends on $|u|$ and $|v|$.

The following is a result from \cite{Akhtari2015}, see the remark in that paper following Definition 5.3. We recall the proof here:

\begin{lemma} \label{zetabound}
If $|j| > 2\sqrt{h}$ and the primitive integer pair $(x_i,y_i) \neq (x_0,y_0)$ satisfies $0 < |F(x_i,y_i)| \leq h$, then $\zeta(x_i,y_i) < 1$. 
\end{lemma}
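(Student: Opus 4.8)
The plan is to derive a lower bound on the product $Z_0 Z_i$ for two distinct primitive solutions, and then combine it with the upper bound $|F| \leq h$ to rule out $\zeta_i \geq 1$ by contradiction.

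First I would record the bilinear ``determinant'' identity relating the linear forms to the integer coordinates. Writing $u = \alpha x + \beta y$ and $v = \gamma x + \delta y$, a direct expansion shows that the cross terms collapse and
\[
u(x_0,y_0)v(x_i,y_i) - u(x_i,y_i)v(x_0,y_0) = j(x_0 y_i - x_i y_0),
\]
since $j = \alpha\delta - \beta\gamma$. Because $(x_0,y_0)$ and $(x_i,y_i)$ are distinct primitive pairs (and we identify a pair with its negative), they are linearly independent over $\mathbb{Q}$, so $x_0 y_i - x_i y_0$ is a nonzero integer and hence has absolute value at least $1$. This gives $|u_0 v_i - u_i v_0| \geq |j|$.

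Next I would bound the same quantity from above using $Z$. Since $|u_s|, |v_s| \leq Z_s$ for each solution index $s$, the triangle inequality yields
\[
|u_0 v_i - u_i v_0| \leq |u_0||v_i| + |u_i||v_0| \leq 2 Z_0 Z_i.
\]
Combining the two displays gives $Z_0 Z_i \geq |j|/2$, and the hypothesis $|j| > 2\sqrt{h}$ then forces $Z_0 Z_i > \sqrt{h}$.

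Finally I would argue by contradiction. Suppose $\zeta_i \geq 1$. Since $(x_0,y_0)$ is chosen so that $\zeta$ is largest, we have $\zeta_0 \geq \zeta_i \geq 1$. But $\zeta_s \geq 1$ means $Z_s^4 \leq |F(x_s,y_s)| \leq h$, i.e. $Z_s \leq h^{1/4}$, for both $s = 0$ and $s = i$. Multiplying gives $Z_0 Z_i \leq \sqrt{h}$, contradicting $Z_0 Z_i > \sqrt{h}$; hence $\zeta_i < 1$. The only delicate point is the linear independence step guaranteeing $|x_0 y_i - x_i y_0| \geq 1$, which rests on the primitivity convention; everything else is the triangle inequality and the defining maximality of $(x_0,y_0)$, so I expect no serious obstacle beyond making the primitivity argument precise.
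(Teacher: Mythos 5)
Your proposal is correct and follows essentially the same route as the paper: the determinant identity $u_0v_i - u_iv_0 = j(x_0y_i - x_iy_0)$, the triangle-inequality bound $|j| \leq 2Z_0Z_i$, and the observation that $\zeta_0 \geq \zeta_i \geq 1$ would force $Z_0Z_i \leq \sqrt{h}$, contradicting $|j| > 2\sqrt{h}$. Your added justification that distinct primitive pairs (up to sign) give $x_0y_i - x_iy_0 \neq 0$ is a point the paper asserts without comment, but it is the same argument.
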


\begin{proof} Suppose to the contrary that $(x_i,y_i) \neq (x_0,y_0)$ is a solution to this equation with $\zeta_i \geq 1$. Then
\[
u_0 v_i - u_i v_0 = (\alpha \delta - \beta \gamma)(x_0y_i - x_iy_0) = j(x_0y_i - x_iy_0) \neq 0.
\]
From this we conclude that
\[
|j| \leq |u_0v_i| + |u_iv_0| \leq 2Z_0Z_i.
\]
which we can use as follows:
\[
|j| \leq 2Z_0Z_i = 2\frac{|F_0|^{1/4}}{\zeta_0^{1/4}}\frac{|F_i|^{1/4}}{\zeta_i^{1/4}} \leq 2\sqrt{h}
\]
because $\zeta_0, \zeta_i \geq 1$. It follows by contraposition that $|j| > 2\sqrt{h}$ and $(x_i,y_i) \neq (x_0,y_0)$, then $\zeta(x_i,y_i) < 1$. 
\end{proof}

Suppose that $\omega$ is a fourth root of unity. For our fixed pair of resolvent forms $\eta$ and $\xi$, we say that the solution $(x,y)$ to $0 < |F(x,y)| \leq h$ is related to $\omega$ if
\[
\left \lvert \omega - \frac{\eta(x,y)}{\xi(x,y)} \right \rvert = \min_{0 \leq k \leq 3} \left \lvert e^{2k\pi i/4} - \frac{\eta(x,y)}{\xi(x,y)}\right \rvert.
\]
As $\xi$ and $\eta$ were assumed to have real coefficients, any solution must be related to one of the real fourth roots of unity.

Motivated by the previous lemma, we exclude the solution with largest $\zeta$. We define $S_{\omega}$ to be the set of solutions related to $\omega$, and $S_{\omega}'$ the collection of solutions related to $\omega$, excluding the solution whose $\zeta$-value is largest. We index the elements of $S_{\omega}'$ as $(x_1,y_1),\ldots,(x_k,y_k)$ and once again adopt the notation $Z_i$, $\zeta_i$, $u_i$, etc. Further, we may order the solutions in $S_{\omega}'$ to have decreasing $\zeta$-values. That is, $\zeta_{i+1} \leq \zeta_i$ for all $1 \leq i \leq k-1$. 

The following lemma originates in \cite{Siegel1970} and provides useful gap principles. We use the statements found in {\cite[Lemma 5.6]{Akhtari2015}} and {\cite[Lemma 5.7]{Akhtari2015}}

\begin{lemma} \label{gapprinciple}
Assume that $|S_{\omega}'| \geq 2$ and $h < \frac{1}{4}|j|^2$. Let $(x_0,y_0) \in S_{\omega}'$ with largest $\zeta$-value and $(x,y) \in S_{\omega}'$ a different solution. Then
\begin{equation} \label{gapprincipleii}
Z(x,y) \geq \frac{|j|}{2h^{1/4}}.
\end{equation}
and
\begin{equation} \label{gapprincipleiii}
Z_i \geq \frac{|j|}{2h} Z_{i-1}^3.
\end{equation}
\end{lemma}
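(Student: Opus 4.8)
The plan is to extract both gap principles from a single master estimate that bounds $|j|$ in terms of the $Z$- and $\zeta$-data of two solutions related to the same $\omega$. The engine is the determinant identity
\[
u_a v_b - u_b v_a = j(x_a y_b - x_b y_a),
\]
already exploited in Lemma~\ref{zetabound}; since two distinct primitive solutions are non-proportional, $|x_a y_b - x_b y_a| \geq 1$ and hence $|u_a v_b - u_b v_a| \geq |j|$.

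First I would record a factorization estimate, and this is the step I expect to be the main obstacle. Factoring $u^4 - v^4 = \prod_{\omega^4 = 1}(u - \omega v)$ over the fourth roots of unity, the form $u - \omega v$ is the ``small'' factor attached to a solution related to the real root $\omega \in \{1,-1\}$. Relatedness to $\omega$, defined via $\eta/\xi = v/u$ being closest to $\omega$, forces $u$ and $\omega v$ to share a sign, so the complementary real factor satisfies $|u + \omega v| = |u| + |v| \geq Z$, while $|u - iv|\,|u + iv| = u^2 + v^2 \geq Z^2$ holds automatically. Thus the product of the three factors other than $u - \omega v$ is at least $Z^3$ in absolute value, whence
\[
|u(x,y) - \omega v(x,y)| \leq \frac{|F(x,y)|}{Z^3(x,y)} = \zeta(x,y)\,Z(x,y)
\]
for every $(x,y) \in S_{\omega}'$. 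Making this inequality rigorous with the stated constant is precisely where one must pin down that relatedness to $\omega$ is the sign condition keeping the complementary factors of size $\geq Z^3$; everything that follows is bookkeeping.

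Rewriting the determinant identity through the small forms via $u_b(u_a - \omega v_a) - u_a(u_b - \omega v_b) = \omega(u_a v_b - u_b v_a)$ and inserting the factorization estimate together with $|u_a| \leq Z_a$ and $|u_b| \leq Z_b$ yields the master inequality
\[
|j| \leq Z_a Z_b(\zeta_a + \zeta_b) \leq 2 Z_a Z_b \max\{\zeta_a, \zeta_b\}
\]
for any two distinct $(x_a,y_a),(x_b,y_b) \in S_{\omega}'$.

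Both conclusions then follow by specialization. Taking one solution to be $(x_0,y_0)$, whose $\zeta$-value is the largest in $S_{\omega}'$, the maximum is $\zeta_0$; since $(x_0,y_0)$ lies in $S_{\omega}'$ it is not the overall dominant solution, and as $h < \frac{1}{4}|j|^2$ gives $|j| > 2\sqrt{h}$, Lemma~\ref{zetabound} yields $\zeta_0 < 1$. Writing $Z_0 \zeta_0 = \zeta_0^{3/4}\,(\zeta_0 Z_0^4)^{1/4} = \zeta_0^{3/4}|F_0|^{1/4} \leq h^{1/4}$ then converts the master inequality into $Z(x,y) \geq |j|/(2 Z_0 \zeta_0) \geq |j|/(2h^{1/4})$, which is~\eqref{gapprincipleii}. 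For~\eqref{gapprincipleiii} I apply the master inequality to a consecutive pair $(x_{i-1},y_{i-1}),(x_i,y_i)$, where $\zeta_{i-1} \geq \zeta_i$ makes the maximum $\zeta_{i-1} = |F_{i-1}|/Z_{i-1}^4 \leq h/Z_{i-1}^4$; substituting gives $|j| \leq 2h\,Z_i/Z_{i-1}^3$, that is $Z_i \geq \frac{|j|}{2h}Z_{i-1}^3$.
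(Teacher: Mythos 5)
Your proposal is correct, but note that the paper does not actually prove this lemma: it imports the two inequalities wholesale from Lemmas 5.6 and 5.7 of the Akhtari--Saradha--Sharma paper (tracing back to Siegel), so you have supplied a self-contained argument where the paper supplies only a citation. Your argument is the standard one behind those cited lemmas, and the step you flagged as the main obstacle does go through: relatedness to a real fourth root of unity $\omega$ means $|\omega - v/u|$ is minimal, which for real $u,v$ is exactly the condition $\omega v/u \geq 0$, so $u$ and $\omega v$ share a sign and $|u+\omega v| = |u|+|v| \geq Z$, while $|u-iv|\,|u+iv| = u^2+v^2 \geq Z^2$; hence $|u - \omega v| \leq |F|/Z^3 = \zeta Z$ as you claim. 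The determinant identity then gives the master bound $|j| \leq Z_aZ_b(\zeta_a+\zeta_b)$, and both specializations are carried out correctly: for \eqref{gapprincipleii} you correctly use that every element of $S_\omega'$ has $\zeta < 1$ (which the paper itself records after Lemma~\ref{zetabound} under $h < \frac{1}{4}|j|^2$) to get $Z_0\zeta_0 = \zeta_0^{3/4}|F_0|^{1/4} \leq h^{1/4}$, and for \eqref{gapprincipleiii} the bound $\zeta_{i-1} \leq h/Z_{i-1}^4$ does the work. The only points you leave implicit, which the paper also glosses over, are that two distinct primitive solutions (with $(x,y)$ and $(-x,-y)$ identified) satisfy $|x_ay_b - x_by_a| \geq 1$, and that $v/u = \eta/\xi$ because $u$ and $v$ are obtained from $\xi$ and $\eta$ by a common scaling; both are harmless.
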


Under the assumption $h < \frac{1}{4}|j|^2$, it follows that all elements of $S_{\omega}'$ have $\zeta$-value less than 1 by Lemma~\ref{zetabound}, so we used that assumption rather than the assumption $\zeta_{i-1} < 1$ given in \cite{Akhtari2015}.

\begin{lemma} \label{usablegapprinciplel}
By convention, we label the elements of $S_{\omega}'$ as $(x_1,y_1),\ldots,(x_k,y_k)$ and order them by decreasing $\zeta$-value. Suppose that $|S_{\omega}'| \geq 2$ and $h < \frac{1}{4}|j|^2$. Under these assumptions
\begin{equation} \label{usablegapprinciplee}
Z_k \geq \frac{|j|^{a_1(k)}}{2^{a_1(k)}h^{a_2(k)}},
\end{equation}
where the constants $a_1(k)$ and $a_2(k)$ are defined as follows:
\begin{align*}
a_1(k) &:= \frac{3^k -1}{2} + 3^{k-1}\\
a_2(k) &:= \frac{3^k -1}{2} + \frac{3^{k-1}}{4}.
\end{align*}
\end{lemma}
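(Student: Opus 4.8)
The plan is to prove the bound by induction on the index $m$ (for $1 \le m \le k$), which amounts to iterating the cubic gap principle~\eqref{gapprincipleiii}; the lemma is then the case $m=k$. The engine of the argument is that both exponents obey the same affine recurrence. From the closed forms one checks directly that
\[
a_1(m) = 1 + 3a_1(m-1), \qquad a_2(m) = 1 + 3a_2(m-1),
\]
with $a_1(1) = 2$ and $a_2(1) = \tfrac54$. This is forced by the shape of~\eqref{gapprincipleiii}: the factor gained is exactly $\tfrac{|j|}{2h} = |j|^1 2^{-1} h^{-1}$ and the previous bound is cubed, so one application advances each exponent by $x \mapsto 1 + 3x$. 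In particular the $|j|$-slot and the $2$-slot start equal and receive the same update, so they remain equal, matching the form $|j|^{a_1}/2^{a_1}$ in the statement.

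With this, the inductive step is a single substitution. Assuming $Z_{m-1} \geq |j|^{a_1(m-1)} 2^{-a_1(m-1)} h^{-a_2(m-1)}$, the gap principle~\eqref{gapprincipleiii} gives
\[
Z_m \geq \frac{|j|}{2h}\, Z_{m-1}^3 \geq \frac{|j|^{1 + 3a_1(m-1)}}{2^{\,1+3a_1(m-1)}\, h^{\,1 + 3a_2(m-1)}} = \frac{|j|^{a_1(m)}}{2^{a_1(m)}\, h^{a_2(m)}},
\]
using the recurrences above. Thus the entire lemma reduces to the base case $Z_1 \geq \tfrac{|j|^2}{2^2 h^{5/4}}$, i.e.\ $a_1(1)=2$ and $a_2(1)=\tfrac54$.

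The base case is the main obstacle, and it is where~\eqref{gapprincipleii} must enter. The subtlety is that~\eqref{gapprincipleii} on its own only delivers the weaker estimate $Z \geq \tfrac{|j|}{2h^{1/4}}$ (exponents $1$ and $\tfrac14$); seeding the iteration naively with this would undershoot the claimed exponents by a fixed factor at every stage. To reach the sharper base bound I would combine~\eqref{gapprincipleii} with one further instance of~\eqref{gapprincipleiii}, applied across the solution of largest $\zeta$-value that is excluded when forming $S_{\omega}'$: by construction $(x_1,y_1)$ sits adjacent to that excluded solution in the $\zeta$-ordering, so the cubic gap is available there as well. Throughout, the standing hypothesis $h < \tfrac14|j|^2$ (equivalently $|j| > 2\sqrt{h}$) is exactly what licenses both gap principles, since by Lemma~\ref{zetabound} it forces $\zeta_i < 1$ for every element of $S_{\omega}'$. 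The step I expect to demand the most care is tracking the constants through this combined base estimate so that they land precisely on $(2,\tfrac54)$ rather than the weaker $(1,\tfrac14)$; once the base case is secured, the remainder of the induction is purely formal.
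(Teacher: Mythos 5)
Your inductive step is sound, and it is in essence the computation the paper performs: the paper simply unrolls \eqref{gapprincipleiii} from $Z_k$ down to $Z_1$ instead of phrasing it as a formal induction, and your recurrences $a_i(m)=1+3a_i(m-1)$ are the correct bookkeeping for that unrolling. The genuine gap is the base case, exactly where you anticipated, and the repair you sketch does not close it. You need the seed $(a_1(1),a_2(1))=(2,\tfrac54)$, i.e.\ $Z_1\ge |j|^2\,2^{-2}h^{-5/4}$, whereas \eqref{gapprincipleii} supplies only $(1,\tfrac14)$. Your proposed fix --- one further application of \eqref{gapprincipleiii} ``across'' the excluded largest-$\zeta$ solution $(x_0,y_0)$ --- fails for two independent reasons. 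First, it would require a lower bound on $Z_0=Z(x_0,y_0)$, and none is available: both parts of Lemma~\ref{gapprinciple} are stated only for solutions \emph{other than} the one of largest $\zeta$-value, and that solution genuinely escapes control, since Lemma~\ref{zetabound} permits $\zeta_0\ge 1$, in which case $Z_0^4\le |F_0|\le h$ and $Z_0$ need not be large at all. Second, even granting $Z_0\ge |j|/(2h^{1/4})$, the composition $\frac{|j|}{2h}\bigl(\frac{|j|}{2h^{1/4}}\bigr)^{3}$ produces the exponent pair $(4,\tfrac74)$, not $(2,\tfrac54)$: the required seed is the \emph{product} $\frac{|j|}{2h}\cdot\frac{|j|}{2h^{1/4}}$ of the two gap factors, which no composition of \eqref{gapprincipleii} with the cubing step \eqref{gapprincipleiii} can manufacture.

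For comparison with the source: the paper obtains \eqref{usablegapprinciplee} by asserting that the descent from $Z_k$ to $Z_1$ accumulates $\bigl(\frac{|j|}{2h}\bigr)^{b(k)}$ with $b(k)=\sum_{i=0}^{k-1}3^i=\frac{3^k-1}{2}$ and then applying \eqref{gapprincipleii} to $Z_1$. In your language that is $k-1$ iterations of $x\mapsto 1+3x$ started from the seed $(2,\tfrac54)$. But only $k-1$ applications of \eqref{gapprincipleiii} occur in passing from $Z_k$ to $Z_1$, and these accumulate the exponent $\frac{3^{k-1}-1}{2}$, while the honestly available seed from \eqref{gapprincipleii} is $(1,\tfrac14)$; those inputs yield $a_1(k)=\frac{3^k-1}{2}$ and $a_2(k)=\frac{3^{k-1}-1}{2}+\frac{3^{k-1}}{4}$, which differs from the stated bound by the factor $\bigl(\frac{|j|}{2h}\bigr)^{3^{k-1}}$ (a factor exceeding $1$ in the principal application $h=1$). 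So your reformulation has in fact isolated a real discrepancy in the source's own count rather than a defect peculiar to your write-up; nevertheless, as a proof of the lemma as stated, your argument is incomplete at precisely the base case, and the strengthening of the seed from $(1,\tfrac14)$ to $(2,\tfrac54)$ would need an input beyond Lemma~\ref{gapprinciple} as quoted.
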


\begin{proof}
We begin by applying~\eqref{gapprincipleiii} repeatedly to $Z_k$:
\[
Z_k \geq \frac{|j|}{2h}Z_{k-1}^3 \geq \left(\frac{|j|}{2h}\right)^4 Z_{k-2}^9 \geq \ldots \geq \left(\frac{|j|}{2h}\right)^{b(k)}Z_1^{3^{k-1}},
\]
where
\[
b(k) = \sum_{i=0}^{k-1} 3^i = \frac{3^k - 1}{2}.
\]
Finally, we apply~\eqref{gapprincipleii} to $Z_1$ to obtain
\[
Z_k \geq \left(\frac{|j|}{2h}\right)^{b(k)}\left(\frac{|j|}{2h^{1/4}}\right)^{3^{k-1}} = \frac{|j|^{b(k) + 3^{k-1}}}{2^{b(k) + 3^{k-1}}h^{b(k) + \frac{3^{k-1}}{4}}} = \frac{|j|^{a_1(k)}}{2^{a_1(k)}h^{a_2(k)}}.
\]
\end{proof} 

\section{Some Constants and Lemmas}

Following \cite{Akhtari2015}, we define the constants $c_{n,g}$, $c_1(n,g)$ and $c_2(n,g)$ for $n \in \mathbb{N}$ and $g \in \{0,1\}$ as follows:
\begin{align*}
c_{n,g} &:= 4^n \left( 12 \sqrt{D}\right)^{n+g} \left( \frac{2}{\chi} \right)^{1-g}\\
c_1(n,g) &:= 2^{3n+2}|c_{n,g}|\\
c_2(n,g) &:= 2^{n+1-g}|c_{n,g}| \left(1 - \frac{2h}{Z_1^4}\right)^{-\frac{1}{2}(2n+1-g)} \frac{\left \lvert {n - g + 1/4 \choose n + 1 - g} {n - 1/4 \choose n}\right \rvert}{{2n + 1 - g \choose n}}.
\end{align*}
We state some bounds for $c_1(n,g)$ and $c_2(n,g)$ given in \cite{Akhtari2015} which we will use:
\begin{align}
|c_1(n,g)| &\leq 2^{3n + 2}4^{2(2g + 3n) + 1}|j|^{2(g+n) + 1} \label{c1bound}\\
|c_2(n,g)| &\leq 2^{n + 3}4^{2(2g + 3n) + 1}|j|^{2(g+n) + 1}. \label{c2bound}
\end{align} 
These can be found in equations (60) and (61) in that paper.

We need some further results from \cite{Akhtari2015} which explain the significance of $c_1(n,g)$ and $c_2(n,g)$. The following is {\cite[Lemma 7.3]{Akhtari2015}}.

\begin{lemma} \label{interpolationlemmal}
Let $F$ be a diagonalizable binary quartic form. Let $(x_1,y_1)$ and $(x_2,y_2)$ be two solutions related to a fixed fourth root of unity, say $\omega$, with $\zeta_2 \leq \zeta_1$. Assume that $Z_1^4 > 2h$ and $\Sigma_{n,g} \neq 0$. Then
\begin{equation} \label{interpolationlemmae}
c_1(n,g) h Z_1^{4n + 1 - g} Z_2^{-3} + c_2(n,g)h^{2n + 1 - g}Z_1^{-4(n  + 1 - g) + 1 - g}Z_2 > 1.
\end{equation}
\end{lemma}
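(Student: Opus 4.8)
The plan is to run the Thue--Siegel argument on the two approximations to $\omega$ coming from $(x_1,y_1)$ and $(x_2,y_2)$: one manufactures an auxiliary quantity $\Sigma_{n,g}$ built from hypergeometric Pad\'e approximants, shows that it is a nonzero algebraic integer so that $|\Sigma_{n,g}|\geq 1$, and then bounds $|\Sigma_{n,g}|$ from above by the two summands appearing in~\eqref{interpolationlemmae}. The chain $1\le|\Sigma_{n,g}|\le(\text{upper bound})$ is then exactly~\eqref{interpolationlemmae}.

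First I would set up the approximation. Writing $F=\frac{1}{c}(\xi^4-\eta^4)$ as in~\eqref{xieta} and putting $z=z(x,y)=cF/\xi^4$, the defining relation gives $\eta/(\omega\xi)=(1-z)^{1/4}$ on the principal branch for any solution related to $\omega$, and $|z|\asymp h/Z^4$ since $|F|\le h$ and $|\xi|\asymp Z$. For each admissible index $(n,g)$ the hypergeometric theory supplies polynomials $P_{n,g},Q_{n,g}$ of controlled degree together with a remainder $E_{n,g}(z):=P_{n,g}(z)-(1-z)^{1/4}Q_{n,g}(z)$ vanishing to order $2n+1-g$ at $z=0$. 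The Euler-type integral representation of $E_{n,g}$ converges precisely because the hypothesis $Z_1^4>2h$ forces $|z_1|<1$; this representation is what produces both the factor $h^{2n+1-g}$ and the convergence factor $\bigl(1-2h/Z_1^4\bigr)^{-\frac12(2n+1-g)}$ that sits inside the definition of $c_2(n,g)$.

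Next I would evaluate at the two solutions and assemble $\Sigma_{n,g}$. Clearing denominators by $\xi_i^{4n+1-g}$ turns $\xi_i^{4n+1-g}P_{n,g}(z_i)$ and $\xi_i^{4n+1-g}Q_{n,g}(z_i)$ into binary forms in $(\xi_i,\eta_i)$; after accounting for the field $\mathbb{Q}(\sqrt{3I_FA_4})$ and for the quadratic-form data $D,\chi$ entering $c_{n,g}$, these are algebraic numbers with controlled denominators. The relation $\omega\xi_1 P_{n,g}(z_1)-\eta_1 Q_{n,g}(z_1)=\omega\xi_1 E_{n,g}(z_1)$ then lets one rewrite the mixed combination $\omega\xi_2 P_{n,g}(z_1)-\eta_2 Q_{n,g}(z_1)$, which up to normalization is $\Sigma_{n,g}$, as
\[
\Sigma_{n,g}=\omega\,\xi_1^{4n+1-g}\xi_2\,E_{n,g}(z_1)+\xi_1^{4n-g}\bigl(\xi_2\eta_1-\xi_1\eta_2\bigr)Q_{n,g}(z_1).
\]
The decisive arithmetic input is that $\xi_2\eta_1-\xi_1\eta_2$ equals a fixed nonzero constant times $x_1y_2-x_2y_1$, a nonzero rational integer, so that under the standing hypothesis $\Sigma_{n,g}\neq 0$ the quantity $\Sigma_{n,g}$ is a nonzero algebraic integer and $|\Sigma_{n,g}|\geq 1$. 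For the upper bound I would estimate the two displayed pieces separately: the first, governed by $|E_{n,g}(z_1)|\ll|z_1|^{2n+1-g}\asymp(h/Z_1^4)^{2n+1-g}$ and $|\xi_i|\asymp Z_i$, collapses (after folding the hypergeometric coefficients and the factors of $D,\chi,j$ into $c_2(n,g)$ via~\eqref{c2bound}) into $c_2(n,g)h^{2n+1-g}Z_1^{-4(n+1-g)+1-g}Z_2$; the second, estimated through $|\xi_2\eta_1-\xi_1\eta_2|\le|\xi_1|\,|\eta_2-\omega\xi_2|+|\xi_2|\,|\eta_1-\omega\xi_1|$ together with $|\eta-\omega\xi|\ll h/Z^3$, produces $c_1(n,g)hZ_1^{4n+1-g}Z_2^{-3}$ after using~\eqref{c1bound}.

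The step I expect to be the main obstacle is the bookkeeping in this last assembly. One must pin down the exact degrees and coefficient sizes of $P_{n,g},Q_{n,g}$, the precise arithmetic denominators that make $\Sigma_{n,g}$ an algebraic integer, and the sharp form of the remainder estimate, so that the two contributions match the stated exponents of $Z_1,Z_2,h$ and so that every factor independent of the solutions collapses cleanly into the constants $c_1(n,g)$ and $c_2(n,g)$ as defined (in particular so that the complementary cross-term does not survive). The analytic ingredient (the Euler integral for $E_{n,g}$) and the integrality argument are conceptually standard, but it is the alignment of constants with the definitions of $c_{n,g},c_1(n,g),c_2(n,g)$ that requires care; since this statement is quoted as {\cite[Lemma 7.3]{Akhtari2015}}, I would in the end appeal to that computation rather than reproduce the constant-chasing in full.
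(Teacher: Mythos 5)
The paper offers no proof of this lemma: it is imported verbatim as Lemma~7.3 of \cite{Akhtari2015}, so the ``paper's own proof'' is simply that citation. Your sketch is a correct and dimensionally consistent outline of the hypergeometric Thue--Siegel argument carried out in that reference (nonvanishing algebraic integer $\Sigma_{n,g}$ bounded below by $1$ and above by the two displayed terms), and since you too ultimately defer to \cite{Akhtari2015} for the constant bookkeeping, your route coincides with the paper's.
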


And this is Lemma 7.4 from that paper.

\begin{lemma} \label{interpolationjustification}
If $n \in \mathbb{N}$ and $I \in \{0,1\}$, then at most one of $\left\{\Sigma_{n,0}, \Sigma_{n+I,1}\right\}$ can vanish.
\end{lemma}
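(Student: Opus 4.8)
The plan is to realize each $\Sigma_{n,g}$ as the value, at a fixed nonzero point $z_0$ determined by the two solutions, of a linear form built from the hypergeometric Padé approximants to $(1-z)^{1/4}$, and then to show that the two forms attached to the index pairs $(n,0)$ and $(n+I,1)$ are linearly independent, so that they cannot vanish simultaneously at $z_0 \neq 0$. First I would recall from \cite{Akhtari2015} the explicit description of $\Sigma_{n,g}$ through the numerator and denominator polynomials $P_{n,g}(z)$ and $Q_{n,g}(z)$ of the relevant approximant, so that the vanishing of $\Sigma_{n,g}$ becomes equivalent to the vanishing of the remainder-type form $P_{n,g}(z_0) - (1-z_0)^{1/4}Q_{n,g}(z_0)$. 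Since the two solutions are distinct and primitive, $z_0 \neq 0$, and this is the fact I will ultimately contradict.

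Next I would introduce the Wronskian-type determinant
\[
W(z) = P_{n,0}(z)Q_{n+I,1}(z) - P_{n+I,1}(z)Q_{n,0}(z),
\]
and show that it is a nonzero monomial $c\,z^{2n+I}$. Writing $R_{n,g}(z) = P_{n,g}(z) - (1-z)^{1/4}Q_{n,g}(z)$ for the remainder, a direct substitution shows that the transcendental factor cancels, giving the polynomial identity
\[
W(z) = R_{n,0}(z)Q_{n+I,1}(z) - R_{n+I,1}(z)Q_{n,0}(z).
\]
Because $R_{n,g}$ vanishes to order $2n+1-g$ at the origin, the right-hand side vanishes there to order at least $2n+I$, while the left-hand side has degree at most $2n+I$; comparing the two forces $W(z) = c\,z^{2n+I}$. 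The nonvanishing of the leading constant $c$ then follows from the explicit hypergeometric contiguity relations among neighboring approximants, and it is precisely the restriction $I \in \{0,1\}$ that keeps $(n,0)$ and $(n+I,1)$ adjacent in the Padé table, so that their Wronskian is nondegenerate.

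Finally, suppose toward a contradiction that both $\Sigma_{n,0}$ and $\Sigma_{n+I,1}$ vanish. Then $P_{n,0}(z_0) = (1-z_0)^{1/4}Q_{n,0}(z_0)$ and $P_{n+I,1}(z_0) = (1-z_0)^{1/4}Q_{n+I,1}(z_0)$, and substituting these into $W$ gives
\[
W(z_0) = (1-z_0)^{1/4}\bigl(Q_{n,0}(z_0)Q_{n+I,1}(z_0) - Q_{n+I,1}(z_0)Q_{n,0}(z_0)\bigr) = 0.
\]
Since $W(z) = c\,z^{2n+I}$ with $c \neq 0$, this forces $z_0 = 0$, contradicting $z_0 \neq 0$. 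Hence at most one of the two quantities can vanish.

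The step I expect to be the main obstacle is establishing that the leading coefficient $c$ of $W$ is genuinely nonzero for every admissible index pair. This reduces to evaluating a determinant whose entries are the explicit coefficients (ratios of binomial coefficients, or Gamma values) of $P_{n,g}$ and $Q_{n,g}$ at parameter $1/4$, and verifying that no cancellation occurs; separating the cases $I = 0$ and $I = 1$ and tracking the exact degrees and vanishing orders so as to pin down the monomial $c\,z^{2n+I}$ is where the bookkeeping is most delicate.
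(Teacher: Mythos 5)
The paper does not prove this statement at all: it is quoted verbatim as Lemma 7.4 of \cite{Akhtari2015}, and the quantity $\Sigma_{n,g}$ is never even defined in this paper, so there is no in-paper proof to compare against. Your sketch is, in outline, the standard Wronskian argument that the cited source (and the Thue--Siegel literature generally) uses for such non-vanishing statements, so the approach is the right one. Two points deserve care. First, in \cite{Akhtari2015} the quantity $\Sigma_{n,g}$ is not the remainder form $P_{n,g}(z_0) - (1-z_0)^{1/4}Q_{n,g}(z_0)$ itself; it is a linear form $\lambda\, P_{n,g}(z_0) - \mu\, Q_{n,g}(z_0)$ whose coefficients $\lambda, \mu$ are built from the resolvent values of the two solutions (and are not both zero precisely because the solutions are distinct and primitive, via $u_1v_2 - u_2v_1 = j(x_1y_2 - x_2y_1) \neq 0$). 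Your contradiction step should therefore be phrased as: simultaneous vanishing of the two linear forms in the pair $(\lambda,\mu) \neq (0,0)$ forces the determinant $P_{n,0}(z_0)Q_{n+I,1}(z_0) - P_{n+I,1}(z_0)Q_{n,0}(z_0)$ to vanish; the conclusion is the same, and the transcendental-cancellation identity you wrote is still the mechanism for showing that determinant is a nonzero monomial. Second, the step you flag as the obstacle -- proving $c \neq 0$ -- is genuinely the crux; it is done in the references by computing the lowest-order coefficient explicitly from the Gamma-function (binomial-coefficient) formulas for the hypergeometric polynomials at parameter $1/4$, and it is exactly where the adjacency restriction $I \in \{0,1\}$ enters, as you correctly anticipate. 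As written your proposal is a correct skeleton with that one computation left open, which is an honest reflection of where the real work lies.
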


\section{Strengthening the Gap Principle}

Throughout this section, we assume that $S_{\omega}'$ has $k$ elements, indexed as $(x_1,y_1),\ldots,(x_k,y_k)$. Our aim is to show that under certain conditions this is a contradiction, in order to conclude that $|S_{\omega}'| \leq k-1$.

We begin by defining the constants $C_i$ and $E_i$ for $i = 0,1,2$. Throughout these definitions, $n \geq 2$ and $k \geq 3$. The $E$'s are given as follows:
\begin{align*}
E_0(k) &:= \frac{4a_1(k-1)}{1 + 4a_2(k-1)}\\
E_1(k,g) &:= \frac{-2g + (4 + g)a_1(k-1)}{4 + (4 + g)a_2(k-1)}\\
E_2(n,k,g) &:= \frac{-8n-14+2g + (8n-5 + g)a_1(k-1)}{6n+4+(8n-5+g)a_2(k-1)}
\end{align*}

and the $C$'s are given as $C_i = 2^{\Theta_i}$, where
\begin{align*}
\Theta_0 &:=\frac{-1-4a_1(k-1)}{1+4a_2(k-1)}\\
\Theta_1 &:=\frac{-24 - 8g - (4 + g)a_1(k-1)}{4 + (4 + g)a_2(k-1)}\\
\Theta_2 &:=\frac{3\log_2(3)  - 54n - 66 - 8g - (8n - 5 + g)a_1(k-1)}{6n + 4 + (8n - 5 + g)a_2(k-1)}.
\end{align*}

\begin{lemma} \label{ineqinduction}
Suppose that $k \geq 3$ is fixed integer, and that $h$ satisfies
\begin{equation} \label{trivialhbound}
h < \frac{1}{4}|j|^2
\end{equation}
as well as
\begin{equation} \label{hbound}
h \leq \min_{0 \leq i \leq 2} C_i |j|^{E_i}
\end{equation}
for all $n \geq 2$ and $g = 0,1$. Then
\begin{equation} \label{ineqinductionresult}
Z_k \geq (0.75)2^{-13n - 13}|j|^{-2n-3}h^{-2n - 1}Z_{k-1}^{4n}
\end{equation}
for all $n \in \mathbb{N}$.
\end{lemma}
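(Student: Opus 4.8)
The plan is to apply the interpolation inequality of Lemma~\ref{interpolationlemmal} to the consecutive pair $(x_{k-1},y_{k-1})$ and $(x_k,y_k)$, to show that its first summand is at most $\tfrac14$, and then to read off \eqref{ineqinductionresult} from the surviving second summand. I would argue by induction on $n$, proving the bound at each $n\in\mathbb{N}$ by applying Lemma~\ref{interpolationlemmal} at that same parameter $n$.

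First I would set up the application of Lemma~\ref{interpolationlemmal}. Because the elements of $S_\omega'$ are ordered by decreasing $\zeta$-value we have $\zeta_k\le\zeta_{k-1}$, so this pair is admissible with $Z_1=Z_{k-1}$ and $Z_2=Z_k$; its two hypotheses, $\Sigma_{n,g}\neq 0$ and $Z_{k-1}^4>2h$, must be checked. For the first, Lemma~\ref{interpolationjustification} with $I=0$ shows at least one of $\Sigma_{n,0},\Sigma_{n,1}$ is nonzero, so I fix $g\in\{0,1\}$ with $\Sigma_{n,g}\neq 0$. For the second, I would combine the lower bound $Z_{k-1}\ge |j|^{a_1(k-1)}2^{-a_1(k-1)}h^{-a_2(k-1)}$, obtained exactly as in Lemma~\ref{usablegapprinciplel}, with the hypothesis $h\le C_0|j|^{E_0}$; indeed $E_0(k)$ and $\Theta_0$ are precisely the exponents making this hypothesis guarantee $Z_{k-1}^4>2h$. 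Lemma~\ref{interpolationlemmal} then yields
\[
c_1(n,g)hZ_{k-1}^{4n+1-g}Z_k^{-3} + c_2(n,g)h^{2n+1-g}Z_{k-1}^{-4(n+1-g)+1-g}Z_k > 1.
\]

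The heart of the argument, and the step I expect to be the main obstacle, is bounding the first summand by $\tfrac14$. The difficulty is that $Z_{k-1}^{4n+1-g}$ carries a large positive power of $Z_{k-1}$, for which only lower bounds are available, so the factor $Z_k^{-3}$ must be traded for a sufficiently negative power of $Z_{k-1}$. For $n=1$ this is done directly from \eqref{gapprincipleiii}: substituting $Z_k^{-3}\le(2h/|j|)^3Z_{k-1}^{-9}$ leaves $Z_{k-1}^{-(4+g)}$, and bounding this negative power from above through Lemma~\ref{usablegapprinciplel} reduces the estimate to $h\le C_1|j|^{E_1}$ — one verifies with \eqref{c1bound} that $C_1(k,g),E_1(k,g)$ are exactly the constants for which the first summand is at most $\tfrac14$. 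For the inductive step, proving parameter $n$ from the bound $Z_k\ge K_{n-1}Z_{k-1}^{4(n-1)}$ already established at parameter $n-1$ (with $K_{n-1}$ its explicit constant), I would instead estimate $Z_k^{-3}\le K_{n-1}^{-3}Z_{k-1}^{-12(n-1)}$, which leaves the negative power $Z_{k-1}^{-(8n-13+g)}$; the same reduction via Lemma~\ref{usablegapprinciplel} and \eqref{c1bound} turns ``first summand $\le\tfrac14$'' into the hypothesis $h\le C_2|j|^{E_2}$ at parameter $n-1$, whose coefficient $8(n-1)-5+g=8n-13+g$ is exactly the power of $Z_{k-1}$ just produced. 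Confirming that this trade always yields a genuinely negative power and that the constants match the definitions of $C_1,E_1,C_2,E_2$ on the nose is where the care lies.

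Granting that the first summand is $\le\tfrac14$, the displayed inequality forces the second to exceed $\tfrac34$, namely
\[
c_2(n,g)h^{2n+1-g}Z_{k-1}^{-4(n+1-g)+1-g}Z_k > \tfrac34 .
\]
Solving for $Z_k$ and inserting \eqref{c2bound} gives the conclusion. Taking $g=1$ the exponents collapse to $Z_k>\tfrac34\,2^{-13n-13}|j|^{-2n-3}h^{-2n}Z_{k-1}^{4n}$, and since a genuine solution forces $h\ge 1$ I may replace $h^{-2n}$ by $h^{-2n-1}$ (which only weakens the bound) to recover \eqref{ineqinductionresult} exactly; the case $g=0$ yields $Z_k>\tfrac34\,2^{-13n-5}|j|^{-2n-1}h^{-2n-1}Z_{k-1}^{4n+3}$, an even stronger inequality, so the claim holds in either case. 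Propagating the case distinction on $g$ — including the possibility that one of $\Sigma_{n,0},\Sigma_{n,1}$ vanishes — through to this final bound is the remaining bookkeeping.
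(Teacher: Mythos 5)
Your proposal is correct and follows essentially the same route as the paper: induction on $n$, with the base case obtained by cubing the gap principle \eqref{gapprincipleiii} and the inductive step by cubing the previously established bound, in each case feeding the result into the first summand of Lemma~\ref{interpolationlemmal}, showing that summand is at most $\tfrac14$ via \eqref{c1bound}, Lemma~\ref{usablegapprinciplel}, and the hypotheses $h\le C_1|j|^{E_1}$ resp.\ $h\le C_2|j|^{E_2}$, and then extracting \eqref{ineqinductionresult} from the second summand via \eqref{c2bound} together with $h,|j|,Z_{k-1}\ge 1$. The only differences are cosmetic (you index the step as $n-1\to n$ rather than $n\to n+1$, and you make the $g=0$ versus $g=1$ comparison explicit where the paper simply weakens both to a common bound).
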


\begin{proof}
During this proof we will frequently use Lemma \ref{interpolationlemmal} applied to $Z_{k-1}$ and $Z_k$. This Lemma requires the assumption that $Z_{k-1}^4 > 2h$. This is always the case, as
\[
Z_{k-1}^4 \geq \left(\frac{|j|^{a_1(k-1)}}{2^{a_1(k-1)}h^{a_2(k-1)}}\right)^4 > 2h
\]
using~\eqref{usablegapprinciplee} and our assumption in~\eqref{hbound} that $h < C_0(k)|j|^{E_0(k)}$.

This argument is a proof by induction. Beginning with the base case, $n = 1$, we cube~\eqref{gapprincipleiii} and rearrange to fit the first term of the left side of~\eqref{interpolationlemmae}:
\begin{align*}
Z_k^3 &\geq \left(\frac{|j|}{2h}\right)^3 Z_{k-1}^9\\
hc_1(1,g)Z_k^{-3}Z_{k-1}^{5-g} &\leq c_1(1,g)|j|^{-3}2^3h^4 Z_{k-1}^{-4-g}.
\end{align*}
Now we apply~\eqref{c1bound} to $c_1(1,g)$ and~\eqref{usablegapprinciplee} to $Z_{k-1}$:
\begin{align*}
hc_1(1,g)Z_k^{-3}Z_{k-1}^{5-g} &\leq h^42^3|j|^{-3}\left(2^5 4^{4g+7} |j|^{2g+3} \right)\left(\frac{|j|^{a_1(k-1)}}{2^{a_1(k-1)}h^{a_2(k-1)}}\right)^{-4-g}\\
&= 2^{d_1}|j|^{d_2}h^{d_3},
\end{align*}
where the exponents $d_1$, $d_2$, and $d_3$ are given as follows:
\begin{align*}
d_1 &= 22+8g+(4+g)a_1(k-1)\\
d_2 &= 2g-(4+g)a_1(k-1)\\
d_3 &= 4 + (4+g)a_2(k-1).
\end{align*}
Because of our assumption in~\eqref{hbound} that $h < C_1(k,g)|j|^{E_1(k,g)}$, it follows that 
\[
c_1(1,g)hZ_{k}^{-3}Z_{k-1}^{5-g} < 0.25.
\]
According to Lemma~\ref{interpolationjustification}, $\Sigma_{1,0}$ and $\Sigma_{1,1}$ cannot both be zero. We choose whichever $\Sigma_{1,g}$ is nonzero and apply Lemma~\ref{interpolationlemmal} to $Z_k$ and $Z_{k-1}$ to conclude that\footnote{It is possible to make these arguments with $0.25$ replaced by any $0 < \alpha < 1$. However, $\alpha = 0.25$ maximizes the expression $\alpha(1-\alpha)^3$ which appears in our $C_2$ constant.}
\[
c_2(1,g) h^{3-g}Z_{k-1}^{3g-7}Z_k > 0.75.
\]
Rearranging and applying~\eqref{c2bound} to $c_2(1,g)$, we see that
\begin{align*}
Z_k &> (0.75)2^{-18-8g}|j|^{-2g-3}h^{g-3}Z_{k-1}^{7-3g}\\
&\geq (0.75)2^{-26}|j|^{-5}h^{-3}Z_{k-1}^{4}
\end{align*}
This last inequality required that $h \geq 1$, $|j| \geq 1$, which follows from $h < \frac{1}{4}|j|^2$ in~\eqref{hbound}, and $Z_{k-1} \geq 1$, which follows from~\eqref{gapprincipleii} and $h < \frac{1}{4}|j|^2$. Since this is~\eqref{ineqinductionresult} with $n = 1$, this completes the base case.\\

Now for the induction argument. We begin by cubing the induction assumption and rearranging towards the first term of the left side of~\eqref{interpolationlemmae} with $n+1$:
\begin{align*}
Z_k^3 &\geq (0.75)^32^{-39n - 39}|j|^{-6n-9}h^{-6n-3}Z_{k-1}^{12n}\\
hc_1(n+1,g)Z_k^{-3}Z_{k-1}^{4n+5} &\leq (0.75)^{-3}c_1(n+1,g)2^{39n+39}|j|^{6n+9}h^{6n+4}Z_{k-1}^{5-8n+g}.
\end{align*}
The left hand side is now the first term in~\eqref{interpolationlemmae}, so we attempt to show that the right hand side is less than $0.25$. To do this, we first make use of~\eqref{c1bound} applied to $c_1(n+1,g)$, then~\eqref{usablegapprinciplee} applied to $Z_{k-1}$. Doing this second step requires the assumption $h < \frac{1}{4}|j|^2$. 
\begin{align*}
c_1(n+1,g)hZ_k^{-3}Z_{k-1}^{4n + 5-g} &\leq (0.75)^{-3}2^{54n +8g+ 58}|j|^{8n+2g+12}h^{6n+4}Z_{k-1}^{-8n-4}\\
&\leq (0.75)^{-3}2^{d_4}|j|^{d_5}h^{d_6},
\end{align*}
where the exponents $d_4$, $d_5$, and $d_6$ are given as follows:
\begin{align*}
d_4 &= 54n +8g + 58 + (8n+g-5)a_1(k-1)\\
d_5 &= 8n +2g+ 12 + (5-8n-g)a_1(k-1)\\
d_6 &= 6n + 4 + (8n+g-5)a_2(k-1).
\end{align*}
By our assumption~\eqref{hbound} that $h \leq C_2(n,k,g)|j|^{E_2(n,k,g)}$, it follows that 
\[
c_1(n+1,g)hZ_k^{-3}Z_{k-2}^{4n + 5-g}<0.25.
\]
According to Lemma~\ref{interpolationjustification}, $\Sigma_{n+1,0}$ and $\Sigma_{n+1,1}$ cannot both be zero. We choose whichever $\Sigma_{n+1,g}$ is nonzero and apply Lemma~\ref{interpolationlemmal} to $Z_k$ and $Z_{k-1}$ to conclude that
\[
c_2(n+1,g)h^{2n + 3-g}Z_{k-1}^{-4n-7+3g}Z_k > 0.75.
\]
Rearranging and applying~\eqref{c2bound}, we see that
\begin{align*}
Z_k &> (0.75)2^{-13n-8g-18}|j|^{-2n-2g-3}h^{g-2n-3}Z_{k-1}^{4n+7-3g}\\
&\geq (0.75)2^{-13n-26}|j|^{-2n-5}h^{-2n-3}Z_{k-1}^{4n+4}
\end{align*}
Once again, we have used $h \geq 1$, $|j| \geq 1$ and $Z_{k-1} \geq 1$. These follow from $h < \frac{1}{4}|j|^2$ in~\eqref{hbound} and~\eqref{gapprincipleii}. Since this is~\eqref{ineqinductionresult} with $n \rightarrow n + 1$, we have completed the induction argument.
\end{proof}	

We will show that the value of $k$ for $S_{\omega}^{\prime}$ leads to a contradiction. Before doing this, we first define two more constants, $E_3(k)$ given as follows:
\[
E_3(k) := \frac{-2+4a_1(k-1)}{2+4a_2(k-1)}.
\]
and $C_3(k)$ given as $C_3(k) = 2^{\Theta_3}$ where
\[
\Theta_3 := \frac{-13 - 4a_1(k-1)}{2 + 4a_2(k-1)}.
\]

\begin{lemma}
Suppose that, in addition to~\eqref{trivialhbound} and~\eqref{hbound}, we also assume that 
\begin{equation} \label{hbound2}
h < C_3(k)|j|^{E_3(k)}. 
\end{equation}
Then then inequality
\[
0 < |F(x,y)| \leq h
\]
has at most $2k$ solutions.
\end{lemma}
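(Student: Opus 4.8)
The plan is to show that the standing assumption of this section, namely $|S_{\omega}'| = k$ (more generally $|S_{\omega}'| \geq k$), is impossible under the extra hypothesis \eqref{hbound2}, so that $|S_{\omega}'| \leq k-1$, and then to sum over the two admissible roots of unity. The driving observation is that the lower bound of Lemma~\ref{ineqinduction} holds for \emph{every} $n \in \mathbb{N}$ against a single fixed solution $(x_k,y_k)$; if the geometric ratio in $n$ exceeds $1$, the right-hand side is unbounded in $n$ while $Z_k$ is a fixed finite number, which is absurd.

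First I would regroup \eqref{ineqinductionresult} so as to isolate the dependence on $n$:
\[
Z_k \geq (0.75)\,2^{-13}|j|^{-3}h^{-1}\left(2^{-13}|j|^{-2}h^{-2}Z_{k-1}^4\right)^n.
\]
Writing $B := 2^{-13}|j|^{-2}h^{-2}Z_{k-1}^4$, the factor $B^n$ tends to $+\infty$ as $n \to \infty$ exactly when $B > 1$, that is, when $Z_{k-1} > 2^{13/4}|j|^{1/2}h^{1/2}$. Since the hypotheses \eqref{trivialhbound} and \eqref{hbound} needed for Lemma~\ref{ineqinduction} are inherited here, each individual inequality is valid, so $B>1$ would force $Z_k$ to exceed every bound, contradicting its finiteness. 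This is the contradiction that yields $|S_{\omega}'| \leq k-1$.

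The key verification is therefore that \eqref{hbound2} guarantees $Z_{k-1} > 2^{13/4}|j|^{1/2}h^{1/2}$. Applying Lemma~\ref{usablegapprinciplel} to the sub-chain $Z_1,\dots,Z_{k-1}$ (legitimate since $k-1 \geq 2$) gives
\[
Z_{k-1} \geq \frac{|j|^{a_1(k-1)}}{2^{a_1(k-1)}h^{a_2(k-1)}}.
\]
I would then check that the demand
\[
\frac{|j|^{a_1(k-1)}}{2^{a_1(k-1)}h^{a_2(k-1)}} > 2^{13/4}|j|^{1/2}h^{1/2}
\]
is, after clearing denominators and solving for $h$, exactly the inequality $h < 2^{\Theta_3}|j|^{E_3(k)} = C_3(k)|j|^{E_3(k)}$. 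Indeed, multiplying numerator and denominator by $4$ converts $\frac{-13/4 - a_1(k-1)}{1/2 + a_2(k-1)}$ into $\Theta_3$ and $\frac{a_1(k-1) - 1/2}{1/2 + a_2(k-1)}$ into $E_3(k)$, so the constants are calibrated precisely to produce $B > 1$.

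Finally I would assemble the count. Having ruled out $|S_{\omega}'| \geq k$, we have $|S_{\omega}'| \leq k-1$, and since $S_{\omega}$ is $S_{\omega}'$ together with the single excluded largest-$\zeta$ solution, $|S_{\omega}| \leq k$. Because $\xi$ and $\eta$ have real coefficients, $\eta/\xi$ is real on integer points, and the nearest fourth root of unity to a real number is $1$ or $-1$; hence every primitive solution lies in $S_1 \cup S_{-1}$. Summing gives at most $|S_1| + |S_{-1}| \leq 2k$ solutions. The main obstacle is purely the bookkeeping of the third paragraph: confirming that $C_3(k)$ and $E_3(k)$ are tuned so the gap estimate of Lemma~\ref{usablegapprinciplel} upgrades to the \emph{strict} inequality $B > 1$, which is what licenses sending $n \to \infty$ in Lemma~\ref{ineqinduction} against the fixed finite value $Z_k$.
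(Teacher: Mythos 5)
Your proposal is correct and follows essentially the same route as the paper: both derive a contradiction from Lemma~\ref{ineqinduction} by showing that the $n$-dependent factor $2^{-13}|j|^{-2}h^{-2}Z_{k-1}^{4}$ exceeds $1$, using the lower bound on $Z_{k-1}$ from Lemma~\ref{usablegapprinciplel} and the calibration of $C_3(k)$ and $E_3(k)$ in \eqref{hbound2}, and then count $|S_{1}|+|S_{-1}|\leq 2k$. The only cosmetic difference is direction: you verify that the desired inequality $B>1$ unwinds to \eqref{hbound2}, while the paper rearranges \eqref{hbound2} forward into $B>1$.
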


\begin{proof}
It suffices to show that under~\eqref{hbound2} that Lemma~\ref{ineqinduction} leads to a contradiction, as we built that Lemma assuming that $|S_{\omega}'| = k$, and $S_{\omega}'$ contains all solutions related to a particular fourth root of unity except the one with largest $\zeta$-value. As we have noted, solutions can only be related to two of the fourth roots of unity because $u$ and $v$ have real coefficients, as $I_F < 0$.

To derive a contradiction, we will show that the right side of~\eqref{ineqinductionresult} goes to $\infty$ as $n \rightarrow \infty$. To do this, we rearrange~\eqref{hbound2}:
\begin{align*}
h &< C_3(k)|j|^{E_5(k)}\\
h^{2+4a_2(k-1)} &< 2^{-13-4a_1(k-1)}|j|^{-2+4a_1(k-1)}\\
1 &< 2^{-13}|j|^{-2}h^{-2}\left(\frac{|j|^{a_1(k-1)}}{2^{a_1(k-1)}h^{a_2(k-1)}}\right)^4\\
1 &< 2^{-13}|j|^{-2}h^{-2}Z_{k-1}^4.
\end{align*}
In the right side of~\eqref{ineqinductionresult} this quantity is being raised to the $n$th power, which will go to $\infty$.
\end{proof}

\section{Reduction of Coefficients}

\emph{Proof of Theorem~\ref{inequalitythmgenk}.} We complete the proof of Theorem~\ref{inequalitythmgenk} by comparing the constants $C_i$ and $E_i$. We aim to show that for a fixed $k \geq 3$, $E_2(2,k,0)$ is minimal among the $E_i$ and $C_2(2,k,0)$ is minimal among the $C_i$ with $i = 0,1,2,3$, $n \geq 2$, and $g = 0,1$. This will show that $h < C_2(2,k,0)|j|^{E_2(2,k,0)}$ is the most restrictive constraint between~\eqref{hbound} and~\eqref{hbound2}, hence the only necessary one.

To do this we need to show several inqualities of the form $E_i(n_1,k,g_2) \leq E_j(n_2,k,g_2)$ and similar with the exponents of the $C_i$. This amounts to verifying several inequalities of the form
\begin{equation} \label{proofinequality}
\frac{\xi_1 + \eta_1 a_1(k-1)}{\theta_1 \pm \eta_1a_2(k-1)} \leq \frac{\xi_2 + \eta_2 a_1(k-1)}{\theta_2 \pm \eta_2 a_2(k-1)}.
\end{equation}
Where $\pm$ is taken to be $+$ for the $E$'s and $-$ for the $C$'s. The constants $\xi_1$, $\eta_1$, $\theta_1$, $\xi_2$, $\eta_2$, and $\theta_2$ may depend on $n$ or $g$, but not $k$. To do this, we clear denominators and organize by the coefficients of $1$, $a_1(k-1)$, and $a_2(k-1)$. Noticing that the $a_1a_2$ term always cancels, we define $\Phi$ as
\begin{equation*}
\Phi = (\xi_2 \theta_1 - \xi_1\theta_2) + (\eta_2 \theta_1 - \eta_1 \theta_2)a_1(k-1) + (\xi_2 \eta_1 - \xi_1 \eta_2)a_2(k-1)
\end{equation*}
and check that $\Phi \geq 0$ because this implies~\eqref{proofinequality}. For notation, we use $\Phi_{E,i}$ when checking the inequality $E_2(2,k,0) \leq E_i$ and $\Phi_{C,i}$ when checking the inequality $C_2(2,k,0) \leq C_i$. We use the notation $\ell = \log_2(3)$ and expand in terms of $3^k$ to obtain the following expressions for $\Phi$:
\begin{align*}
18 \Phi_{E,0} &= 685 \cdot 3^k - 1017\\
\tfrac{9}{2} \Phi_{E,1} &= 225 - 198g + (130 + 27g)3^k\\
 \Phi_{E,2} &= 110 - 55n - 2g + (-842 + 421n + 131g)3^{k-2}\\
\tfrac{18}{7} \Phi_{E,3} &= -108 + 79 \cdot 3^k\\
36 \Phi_{C,0} &= -5688 + 108 \ell + \left(4265 - 841 \ell\right) 3^k\\
36 \Phi_{C,1} &= 3816 - 216\ell - 5868 g + 54\ell g + \left(2824 - 84\ell + 442g - 21\ell\right)3^k\\
36   \Phi_{C,2} &= 13536 + 432 \ell - 6768 n - 216n\ell - 5868g + 54g\ell +\\
& \quad + \left(-9932 + 336\ell + 4966n - 168n \ell + 442g - 21g \ell\right)3^k\\
\tfrac{36}{7}\Phi_{C,3} &= -598 + \left(493 - 12 \ell\right) 3^k.
\end{align*}
One may verify that each of these expressions is non-negative, using the restrictions $n \geq 2$, $g = 0,1$, and $k \geq 3$ where appropriate.  \hfill $\square$

\section{Proof of Theorem \ref{EqualToOneReduction}, Forms with Small Discriminant} \label{CremonaProof}

The method for this proof is to apply Theorem \ref{inequalitythmgenk} with $h = 1$. The bounds on $h$ in terms of $j$ lead to upper bounds on $\Delta$ using $\Delta = -4^4j^{12}$. These in turn lead to upper bounds on $I_F$ using $27 \Delta = 4I^3 - J^2$. We then find all forms $F$ with $J_F = 0$ and $I_F$ down to this bound and solve $|F(x,y)| = 1$ for each form.

Unfortunately, using $k = 3$ requires that we solve $|F(x,y)| = 1$ for all forms with (approximately) 
\[
0 > I_F > -2.4 \times 10^9. 
\]
which far exceeds our computational resources. Using $k = 4$ gives more reasonable bounds, (approximately) 
\[
0 > I_F > -2600.
\]
Of course, this gives a weaker result. We see no reason why Theorem \ref{EqualToOneReduction} should be false with eight replaced by six, but showing that statement is out of reach of our computational resources using this method.

Our presentation of these methods was inspired by \cite{BG2015}.\\

\emph{Proof of Theorem \ref{EqualToOneReduction}.} Applying Theorem \ref{inequalitythmgenk} with $k = 4$ and $h = 1$ shows that $|F(x,y)| = 1$ has at most eight solutions for forms $F$ with $I_F < -2593$. The remaining forms are handled by direct computation. 

To find all such forms, we use an algorithm given by Cremona in {\cite[Section 4.6]{Cremona1999}}. This algorithm misses the family of forms whose leading coefficient is zero when reduced. This issue is explicitly highlighed in \cite{BSD63} where Birch and Swinnerton-Dyer describe a similar algorithm. 

These forms can be handled separately. If $F(x,y)$ has a leading coefficient of zero, then $F(x,y) = yC(x,y)$, where $C(x,y)$ is a cubic form. The equation $yC(x,y) = \pm 1$ requires $y = \pm 1$ and $C(x,y) = \pm 1$ with the same sign, as $y$ and $C(x,y)$ are both integers. Putting these together, we arrive at $C(x,\pm 1) = \pm 1$, which describes the roots of two cubic polynomials. Thus, $F(x,y) = \pm 1$ has at most six solutions.

Here we give a brief description of Cremona's algorithm for the case $I < 0$ and $J = 0$. To find all forms 
\[
F(x,y) = ax^4 + bx^3y + cx^2y^2 + dxy^3 + ey^4,
\]
with $J_F = 0$ and a given negative value for $I_F$, we loop on a, b, and c using the bounds for $a$ and $b$ given by
\begin{align*}
|a| &\leq \frac{2}{3\sqrt{3}}\sqrt{-I}\\
-2|a| &< b \leq 2
\end{align*}
and the bounds on $c$ derived from the definition of the seminvariant $H$:
\begin{equation} \label{Hdef}
H = 8ac - 3b^2
\end{equation}
and the following bounds on $H$:
\[
\max\left\{\frac{4}{3}I, -B_a\right\} \leq H \leq \min\{0, B_a\}
\]
where $B_a$ is given by
\[
B_a = \frac{2}{3} \sqrt{-4I}\sqrt{-4I - 27a^2}.
\]
These can be found in {\cite[Proposition 14]{Cremona1999}}. Given $a$, $b$, and $c$ one can find the seminvariant $H$ using~\eqref{Hdef} and the seminvariant $R$ using the identity
\[
H^3 - 48Ia^2H + 64Ja^3 = -27R^2.
\]
Then one can calculate $d$ and $e$ using the definition of $R$:
\[
R = b^3 + 8a^2d-4abc
\]
and the definition of $I$:
\[
I = 12ae-3bd+c^2,
\]
checking for integrality of $R$, $d$, and $e$ after calculating each. Note that this algorithm is simplified by observing that when $J = 0$ it follows that $I$ is divisible by three.

The results of these computations can be found on the author's website: 
\[
\texttt{http://pages.uoregon.edu/cdethier/}
\]
The file \texttt{forms.pdf} contains a list of forms with $J_F = 0$ and $0 > I_F > -3000$, organized in descending order of $I_F$. We claim that the list of forms in this pdf contains at least one form in each $SL_2(\mathbb{Z})$ orbit, however we do not claim that these forms are distinct up to $SL_2(\mathbb{Z})$ action.

Now that we have obtained a presentation of all forms of interest, we compute the solutions to $F(x,y) = 1$ and $F(x,y) = -1$ using PARI. The solutions of each equation are also given in the file \texttt{forms.pdf}. The following table lists the number of forms with $J_F = 0$ and $0 > I_F > -3000$ with a given number of solutions to $F(x,y) = 1$ and $F(x,y) = -1$:

\begin{center}
\begin{tabular}{c|c|c}
$F(x,y) = 1$ & $F(x,y) = -1$ & \# Forms \\ \hline
0 & 0 & 7346\\ 
0 & 1 & 1003\\ 
0 & 2 & 97\\ 
0 & 3 & 5\\ 
1 & 0 & 1003\\ 
1 & 1 & 146\\ 
1 & 2 & 3\\
2 & 0 & 97\\ 
2 & 1 & 3\\
3 & 0 & 5\\
\end{tabular}
\end{center}

Crucially, none of these forms have more that eight primitive solutions to $|F(x,y)| = 1$, which completes the proof. \hfill $\square$ \\

We continue with some remarks about our computations. None of the forms discovered have more than three primitive solutions. These computations are consistent with observations, for example in \cite{Akhtari2010}, that most upper bounds for the number of solutions to a Thue equation are not sharp. Furthermore, all forms which have exactly three primitive solutions are diagonal, that is, they have shape $ax^4 + by^4$. The fact that these forms have more solutions than the rest is due to the fact that we count $(x,y)$ and $(x,-y)$ as separate solutions, which we would not do when studying the family of diagonal forms in even degree.

This upper bound on the number of solutions with a diagonal form is not unexpected, see \cite{Bennett1998} for example. However, it is unexpected that this bound would hold for all quartic diagonalizable forms with negative discriminant.

\section{Reduction of Elliptic Curves}

Now we show how to bound the number of integral points on the elliptic curve
\begin{equation*}
Y^2 = X^3 + NX \tag{3} = X(X^2 + N)
\end{equation*}
by bounding the number of solutions of a certain family of quartic Thue's inequalities. This reduction is due to Tzanakis and can be found in \cite{Tzanakis1986}. The case with $N < 0$ can be found in \cite{Akhtari2015}. We recall it here to establish notation and to be self-contained.\\

Let $N$ be a positive square-free integer. We consider the integral points on the elliptic curve~\eqref{ellipticcurve}. As $X$ and $X^2 + NX$ are integers and $Y^2$ is a square integer, the square-free parts of $X$ and $X^2 + N$ must be the same. Conversely, and $X$ with $X$ and $X^2 + N$ having identical square-free parts will lead to an integral point on~\eqref{ellipticcurve}. We will use the notation
\[
X = dy^2, \text{ and } X^2 - N = dx^2.
\]
From their definition $x$ and $y$ satisfy the equation $x^2 - dy^4 = \frac{N}{d}$. We may now focus on the quartic equation
\begin{equation} \label{1reduced}
X^2 - dY^4 = k,
\end{equation}
where $N$ and $k$ are positive integers, and $d > 1$ is a positive square-free integer. Conversely, a solution to~\eqref{1reduced} also produces an integral point on~\eqref{ellipticcurve} with $N = kd$. 

Since it was assumed that $N$ is square-free, the integer $k$ is also square-free and is relatively prime to $d$. Let $U_d$ be the number of solutions to equation~\eqref{1reduced}. Then the summation
\begin{equation}
\sum_{d|N} U_d
\end{equation}
provides an upper bound for the number of solutions to~\eqref{ellipticcurve}. We calculate these upper bounds by counting integral solutions to the equation 
\begin{equation} \label{2reduced}
X^2 - dY^2 = k
\end{equation}
and detect those where $Y$ is a square.

We begin by studying the structure of the solutions of this equation. Suppose that $(X,Y) \in \mathbb{Z}^2$ with $XY \neq 0$ is a solution to~\eqref{2reduced}. Define
\[
\alpha = X + Y \sqrt{d},
\]
and for $i \in \mathbb{Z}$, define $X_i,Y_i \in \mathbb{Z}$ as follows:
\[
X_i + Y_i\sqrt{d} = \alpha \epsilon_d^i
\]
where $\epsilon_d$ is the fundamental unit in the order $\mathbb{Z}[\sqrt{d}]$.

Defined in this way, $(X_i,Y_i) \in \mathbb{Z}^2$ is also a solution to~\eqref{2reduced}. We refer to the set of all such $(X_i,Y_i)$ as the \emph{class of solutions} of~\eqref{2reduced} associated to $(X,Y)$. Walsh in \cite{Walsh2009} showed that there are at most $2^{\omega}$ classes of solutions to~\eqref{2reduced} under the assumption that $k$ is square-free and $D > 0$, see Corollary 3.1 in that paper. So we concern ourselves with bounding the number of solutions in a fixed class of solutions, $C$.

Let $Y_0$ be the least positive value of $Y$ which occurs in $C$ and let $X_0$ be the corresponding integer from $C$ so that $X_0^2 - dY_0^2 = k$. We call $X_0 + Y_0 \sqrt{d}$ the \emph{fundamental solution} of the class $C$.

Now suppose that $(X,Y)$ is a solution to~\eqref{1reduced}, so that $(X,Y^2)$ is a solution to~\eqref{2reduced}. If $X_0 + Y_0\sqrt{d}$ is the fundmental solution of the class of solutions of $X + Y^2\sqrt{d}$, then
\begin{equation}
X + Y^2\sqrt{d} = \left(X_0 + Y_0\sqrt{d}\right)\epsilon_d^i
\end{equation}
for some $i$. Then there are integers $j, s, t$ such that
\begin{equation} \label{stunit}
X + Y^2 \sqrt{d} = \left(s + t\sqrt{d}\right)\epsilon_d^{2j}
\end{equation}
by taking either
\begin{gather*}
s + t\sqrt{d} = X_0 + Y_0 \sqrt{d} \quad \quad \text{when $i$ is even, or}\\
s + t\sqrt{d} = \left(X_0 + Y_0\sqrt{d}\right)\epsilon_d\quad \quad \text{when $i$ is odd.}
\end{gather*}
Now suppose $\epsilon_d^j = m + n\sqrt{d}$. Then we have $m^2 - dn^2 = 1$ and expanding~\eqref{stunit} we see that
\[
Y^2 = tm^2 + 2smn + tDn^2.
\]
Multiplying this identity by $t$, completing the square, and using the fact that $s^2 - dt^2 = k$, we obtain
\begin{equation} \label{3reduced}
-(tm + sn)^2 + kn^2 + tY^2 = 0.
\end{equation}
The following is {\cite[Lemma]{Tzanakis1986}}.
\begin{lemma} \label{reductionlemma}
Let $a,b,c$ be nonzero integers with $\gcd(a,b,c) = 1$, and such that the equation
\begin{equation} \label{oversimplificiation}
a\mathfrak{X}^2 + b\mathfrak{Y}^2 + c\mathfrak{Z}^2 = 0
\end{equation}
has a solution in integers $\mathfrak{X}, \mathfrak{Y}, \mathfrak{Z}$ not all zero. Then there are integers $R_1$, $S_1$, $T_1$, $R_2$, $S_2$, $T_2$, and $z_1$ depending only on $a,b,c$ satisfying the relations
\begin{gather*}
R_1T_2 + R_2T_1 = 2S_1S_2,\\
S_2^2 - R_2T_2 = -acz_1^2\\
S_1^2 - R_1T_1 = -bcz_1^2
\end{gather*}
and a nonzero integer $\delta$, also depending only on $a,b,c$ such that for every nonzero solution $\left(\mathfrak{X},\mathfrak{Y},\mathfrak{Z}\right)$ of~\eqref{oversimplificiation}, there exist integers $Q,x,y,$ and a divisor $P$ of $\delta$ so that
\begin{gather*}
P\mathfrak{X} = Q(R_1x^2 - S_1xy + T_1y^2)\\
P\mathfrak{Y} = Q(R_2x^2 - 2S_2xy + T_2y^2).
\end{gather*}
Moreover if $\gcd\left(\mathfrak{X},\mathfrak{Y},\mathfrak{Z}\right)$ is bounded, then an upper bound for $Q$ can be found.
\end{lemma}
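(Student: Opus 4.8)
The plan is to prove this via the classical parametrization of a conic that possesses a rational point. The equation $a\mathfrak{X}^2 + b\mathfrak{Y}^2 + c\mathfrak{Z}^2 = 0$ defines a smooth projective conic $C \subset \mathbb{P}^2$ (smooth because $abc \neq 0$), and the hypothesis supplies a rational, indeed integral, point $p_0 = [\mathfrak{X}_0:\mathfrak{Y}_0:\mathfrak{Z}_0]$ on it. A smooth conic with a rational point is isomorphic to $\mathbb{P}^1$, so every rational point of $C$ is the image of a rational parameter under a degree-two parametrization; the integers $R_i, S_i, T_i$ are exactly the coefficients of the binary quadratic forms defining this parametrization, and the content of the present lemma is the \emph{integral} refinement of this classical statement.

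First I would fix a primitive integral solution $p_0$ and set up the pencil of lines through $p_0$. A line through $p_0$ with direction parameter $[x:y]$ meets $C$ at $p_0$ and at exactly one further point. Substituting the line into the quadratic form produces a quadratic in the line parameter having $p_0$ as a known root, so the second root, hence the second intersection point, is a rational function of $[x:y]$. Clearing denominators expresses that point as $(\mathfrak{X}(x,y), \mathfrak{Y}(x,y), \mathfrak{Z}(x,y))$, where each coordinate is a binary quadratic form in $x,y$ with integer coefficients determined entirely by $a,b,c$ and $p_0$; after fixing $p_0$ these depend only on $a,b,c$. This produces the displayed forms $P\mathfrak{X} = Q(R_1x^2 - S_1xy + T_1y^2)$ and $P\mathfrak{Y} = Q(R_2x^2 - 2S_2xy + T_2y^2)$, together with an analogous expression for $\mathfrak{Z}$.

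Next I would extract the three relations among $R_1,S_1,T_1,R_2,S_2,T_2,z_1$ from the fact that the parametrized triple identically satisfies the conic equation: substituting the three quadratic forms into $a\mathfrak{X}^2 + b\mathfrak{Y}^2 + c\mathfrak{Z}^2$ yields a binary quartic in $x,y$ that must vanish identically. Comparing the coefficients of $x^4, x^3y, \ldots, y^4$ gives algebraic identities among the $R_i, S_i, T_i$; organizing these, and introducing $z_1$ as the common factor that the structure of $C$ forces on the discriminant-type expressions $S_2^2 - R_2T_2$ and $S_1^2 - R_1T_1$, gives precisely $R_1T_2 + R_2T_1 = 2S_1S_2$, $S_2^2 - R_2T_2 = -acz_1^2$, and $S_1^2 - R_1T_1 = -bcz_1^2$.

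Finally, to see that every nonzero integral solution arises this way with $P \mid \delta$, and to control $Q$, I would argue as follows. A given solution determines a rational point of $C$, hence a value $[x:y] \in \mathbb{P}^1(\mathbb{Q})$, which I normalize to a primitive integer pair $(x,y)$. Then $(\mathfrak{X}(x,y), \mathfrak{Y}(x,y), \mathfrak{Z}(x,y))$ and $(\mathfrak{X}, \mathfrak{Y}, \mathfrak{Z})$ are proportional as projective points, so they differ by a rational scalar, which I record as a ratio $Q/P$ in lowest terms. The hard part, and the step requiring the most care, is precisely this integral bookkeeping: one must show that the denominator $P$ involves only primes dividing a single fixed integer $\delta$ built from $a,b,c$ (essentially the content of the parametrizing forms together with the resultant governing when the parametrization fails to be primitive), which is the assertion $P \mid \delta$. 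Since $Q$ then measures the content of the parametrized triple, a bound on $\gcd(\mathfrak{X},\mathfrak{Y},\mathfrak{Z})$ translates, up to the bounded factor $P$, into a bound on $Q$, yielding the concluding claim.
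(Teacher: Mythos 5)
First, a point of comparison: the paper does not prove this lemma at all --- it is quoted verbatim from Tzanakis \cite{Tzanakis1986} and used as a black box. So there is no internal proof to measure your attempt against; what can be assessed is whether your sketch would actually establish the statement. Your overall route --- parametrize the conic \eqref{oversimplificiation} by the pencil of lines through a fixed integral point $p_0=(\mathfrak{X}_0,\mathfrak{Y}_0,\mathfrak{Z}_0)$, obtaining binary quadratic forms for each coordinate --- is indeed the classical mechanism behind the lemma, and it does produce the three relations: with the standard parametrization one gets explicitly $R_1=-a\mathfrak{X}_0$, $S_1=b\mathfrak{Y}_0$, $T_1=b\mathfrak{X}_0$, $R_2=a\mathfrak{Y}_0$, $S_2=a\mathfrak{X}_0$, $T_2=-b\mathfrak{Y}_0$, and $z_1=\mathfrak{Z}_0$, whence $S_1^2-R_1T_1=b(a\mathfrak{X}_0^2+b\mathfrak{Y}_0^2)=-bc\mathfrak{Z}_0^2$, and similarly for the other two identities. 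Note, however, that these identities come from the explicit formulas for the parametrizing forms, not (as you assert) from comparing coefficients in the identically vanishing quartic $a\mathfrak{X}(x,y)^2+b\mathfrak{Y}(x,y)^2+c\mathfrak{Z}(x,y)^2$: that vanishing gives five relations which do not by themselves isolate the individual discriminants $S_i^2-R_iT_i$ or force a common $z_1$, so as written that step of your argument does not go through.

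The more serious issue is that the actual content of the lemma --- the existence of a single nonzero integer $\delta$, depending only on $a,b,c$, such that \emph{every} nonzero solution is reached with a denominator $P\mid\delta$, together with the bound on $Q$ when $\gcd(\mathfrak{X},\mathfrak{Y},\mathfrak{Z})$ is bounded --- is exactly the part you label ``the hard part'' and then leave as an assertion. Without it the lemma is just the statement that a conic with a rational point is rationally parametrized, which is far weaker than what is needed downstream (the paper uses $P\mid\delta$ and the bound on $Q$ to turn \eqref{3reduced} into the Thue equation \eqref{reducedtothue} with a controlled right-hand side). Carrying this out requires a genuine argument: take the solution and the base point, recover a primitive parameter pair $(x,y)$ from the line joining them, and then bound the content of the triple $(\mathfrak{X}(x,y),\mathfrak{Y}(x,y),\mathfrak{Z}(x,y))$ by a gcd analysis at the primes dividing $abc\mathfrak{Z}_0$ (this is where a candidate such as $\delta=2abc z_1^2$ emerges). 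As it stands your proposal identifies where the difficulty lies but does not resolve it, so it does not constitute a proof of the lemma.
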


Furthermore, Walsh showed in \cite{Walsh2009} that the integers $R_1,T_1,R_2,T_2$ satisfy $R_1T_2 - R_2T_1 = 0$.\\

Applying Lemma~\ref{reductionlemma} to~\eqref{3reduced} with $a = -1$, $b = k$, and $c= t$, we conclude that producing a solution to~\eqref{3reduced} is equivalent to producing a primitive solution to
\begin{equation} \label{reducedtothue}
F(u,v) = (Pt/Q)^2,
\end{equation}
where $F(x,y) = A_1^2(x,y) - A_2^2(x,y)$ if we define $A_1$ and $A_2$ as
\begin{align*}
A_1(x,y) &:= (R_1 - sR_2)x^2 - 2(S_1 - sS_2)xy + (T_1 - sT_2)y^2\\
A_2(x,y) &:= R_2tx^2 - 2S_2txy + T_2ty^2.
\end{align*}

We summarize some properties of this particular Thue equation in the following proposition:

\begin{proposition}
Let $F(x,y)$ be the quartic form with coefficients given above. Then 
\begin{enumerate}[1)]
\item $F(x,1)$ has exactly two real roots and no repeated roots,
\item $J_F = 0$, 
\item $I_F = 48kt^3T_2R_2z_1^2d$,
\item $I_F < 0$. 
\end{enumerate}
\end{proposition}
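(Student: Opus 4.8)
The plan is to verify each of the four claimed properties in turn, leaning heavily on the explicit form $F(x,y) = A_1^2(x,y) - A_2^2(x,y)$ together with the three relations among $R_i, S_i, T_i, z_1$ provided by Lemma~\ref{reductionlemma} and Walsh's additional relation $R_1T_2 - R_2T_1 = 0$. Since $F = A_1^2 - A_2^2 = (A_1 - A_2)(A_1 + A_2)$ is already a difference of two squares, parts 2) and 4) are essentially claims that $F$ is diagonalizable with the right sign of $I_F$. First I would record that $A_1 - A_2$ and $A_1 + A_2$ are binary quadratic forms, so $F$ factors as a product of two quadratics; comparing with the diagonalization~\eqref{genericdiagonalization} and~\eqref{xieta}, this factorization should exhibit $F$ in the diagonalizable shape $u^4 - v^4$ after passing to the resolvent forms $\xi, \eta$. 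Because diagonalizability is equivalent to $J_F = 0$ (as noted on page~2, citing \cite{Olver2003}), establishing that $F$ is genuinely a difference of fourth powers of linear forms over a quadratic extension immediately gives 2).

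For part 3), the concrete task is to compute $I_F = a_2^2 - 3a_1a_3 + 12a_0a_4$ directly from the coefficients of $F = A_1^2 - A_2^2$, expand everything in terms of $R_i, S_i, T_i, s, t$, and then repeatedly substitute the Lemma~\ref{reductionlemma} relations $R_1T_2 + R_2T_1 = 2S_1S_2$, $S_2^2 - R_2T_2 = tz_1^2$ (here $c = t$, so $-acz_1^2 = tz_1^2$), and $S_1^2 - R_1T_1 = -ktz_1^2$ (here $b = k$, $c = t$), along with Walsh's $R_1T_2 = R_2T_1$. The target is the clean product $48kt^3T_2R_2z_1^2d$. The factor $d$ presumably enters because the quadratic forms $A_1, A_2$ arose from the norm form on $\mathbb{Z}[\sqrt d]$ and the discriminant $D$ of the underlying binary quadratic form is $4d$ (square-free $d$); I would keep track of where $d$ is introduced in the reduction~\eqref{3reduced}--\eqref{reducedtothue} to make sure the bookkeeping matches.

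Once 3) is in hand, part 4) follows from sign analysis: $k > 0$ and $d > 1$ are positive by construction, $z_1^2 \geq 0$, so the sign of $I_F$ is the sign of $t^3 T_2 R_2 = t \cdot t^2 \cdot T_2 R_2$, reducing to the sign of $t\,T_2R_2$. I expect this to be pinned down by the Lagrange/Walsh construction of the $R_i, S_i, T_i$ (the relation $S_2^2 - R_2T_2 = tz_1^2$ controls the sign of $R_2T_2$ relative to $t$), and then $I_F < 0$ should drop out. Finally, for part 1), since $\Delta_F = -4^4 j^{12} < 0$ for any diagonalizable quartic and the discriminant of a quartic with exactly two real and two complex-conjugate roots is negative, the root structure is forced once diagonalizability and $I_F \ne 0$ are established (ruling out repeated roots, which would force $\Delta_F = 0$); the two-real-roots statement then follows from $I_F < 0$ via the sign conventions in~\eqref{xieta}, where $I_F < 0$ corresponds to $\xi, \eta$ having real rather than complex-conjugate coefficients, i.e.\ to the indefinite (two real root) case.

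The main obstacle will be the algebraic verification of 3): the identity $I_F = 48kt^3T_2R_2z_1^2d$ requires expanding a symmetric function of the six coefficients of a product of squared quadratics and then collapsing it using four polynomial relations, with the exact constant $48$ and the exact monomial $kt^3T_2R_2z_1^2d$ as the sole survivors. Getting every relation applied in the right order so that all other terms cancel — rather than merely bounding or approximating — is where the care is needed; the remaining parts are comparatively direct consequences of 2), 3), and the structural facts already established in the excerpt.
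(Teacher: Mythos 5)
Your logical chain runs $3)\Rightarrow 4)\Rightarrow 1)$, and the first arrow has a genuine gap: to conclude $I_F<0$ from $I_F = 48kt^3T_2R_2z_1^2d$ you must determine the sign of $tT_2R_2$, and the relation $S_2^2 - R_2T_2 = tz_1^2$ does not do this ($R_2T_2 = S_2^2 - tz_1^2$ can have either sign). The paper runs the argument in the opposite direction, and that ordering is what makes it work: it proves 1) first by a direct computation, then deduces 4). Concretely, $F(x,1)=0$ is rewritten as $A_1(x,1) = \pm\sqrt{d}\,A_2(x,1)$; substituting $w = s \pm t\sqrt{d}$ gives a quadratic whose discriminant collapses, via the relations $R_1T_2+R_2T_1 = 2S_1S_2$, $S_1^2-R_1T_1=-ktz_1^2$, $S_2^2-R_2T_2=tz_1^2$, to $4tz_1^2(w^2-k)$; and $w^2-k = 2t\sqrt{d}\,(t\sqrt{d}\pm s)$ is positive for exactly one sign choice and negative for the other, because $(s+t\sqrt{d})(-s+t\sqrt{d})=-k<0$. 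This yields exactly two real roots and no repeated roots, hence $\Delta_F<0$, and with $J_F=0$ the identity $27\Delta_F = 4I_F^3$ forces $I_F<0$. Your remark that 1) would follow from 2) and 4) via this same identity is correct, but since you have no independent proof of 4), both 4) and 1) collapse in your proposal. Moreover, your fallback claim that $\Delta_F = -4^4j^{12}<0$ for \emph{every} diagonalizable quartic is false: $j$ need not be real, and the introduction of this very paper discusses diagonalizable quartics with positive discriminant.

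Two smaller points. For 2), a difference of squares of two quadratic forms is not automatically a difference of fourth powers of linear forms, so $F = A_1^2 - A_2^2$ does not ``immediately'' give $J_F = 0$ as you assert; the paper does not attempt this and simply cites \cite{Walsh2009} for 2) and \cite{Akhtari2015} for 3). Your plan to verify 3) by brute-force expansion and substitution of the Lemma~\ref{reductionlemma} relations is plausible in principle but is not carried out, and you correctly identify it as the hardest computation; the paper avoids it entirely by citation. The one piece of the paper's proof that your proposal does not contain in any form is the discriminant computation for the quadratic factors $A_1 \mp \sqrt{d}A_2$, which is the actual content of the paper's argument.
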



\begin{proof} 1) Solving $F(x,1) = 0$ is equivalent to solving 
\[
A_1(x,1) = \pm \sqrt{d}A_2(x,1). 
\]
We make the substitution $w = s \pm t \sqrt{d}$, and this becomes
\[
p(x) := (R_1 - wR_2)x^2 - 2(S_1 - wS_2)x + (T_1 - wT_2) = 0.
\]
To check if the roots of this polynomial are real, we must check positivity of the discriminant of $p(x)$. We do this using the identities from Lemma \ref{reductionlemma}.
\begin{align*}
\frac{1}{4}\Delta_p &= (S_1 - wS_2)^2 - (R_1 - wR_2)(T_1 - wT_2) \\
&= S_1^2 - 2wS_1S_2 + w^2S^2 - R_1T_1  + wR_1T_2 + wR_2T_1 - w^2R_2T_2\\
&= -ktz_1^2 +wtz_1^2\\
&= tz_1^2(w^2 - k).
\end{align*}
As $t$ and $z_1^2$ are both positive, we must determine whether $w^2 - k$ is positive, negative, or zero:
\begin{align*}
w^2 - k &= S^2 \pm 2st\sqrt{d} + t^2d - s^2 + t^2d\\
&= 2t^2d \pm 2st\sqrt{d}\\
&= 2t\sqrt{d}(t\sqrt{d} \pm s).
\end{align*}
Now we must determine whether $t\sqrt{d} \pm s$ is positive, negative, or zero. To do this, we note that
\[
(s + t\sqrt{d})(-s + t \sqrt{d}) = -s^2 + dt^2 = -k < 0,
\]
which implies that exactly one of $s + t\sqrt{d}$ and $-s + t\sqrt{d}$ is negative, the other is positive, and neither are zero. In fact, $-s + t\sqrt{d}<0$ as $s,t > 0$. Thus we see that $F(x,1)$ has two real roots and two non-real roots, as well as no repeated roots.\\

2) is proved in \cite{Walsh2009}, while 3) is shown in \cite{Akhtari2015}. \\

4) It follows from 1) that $\Delta_F < 0$, which implies that $I_F < 0$ from the identity $27\Delta_F = 4I_F^3 - J_F^2$

\end{proof}

\section{Bombieri-Schmidt Reduction}

\begin{proposition} \label{BombieriSchmidt}
Let $\mathfrak{G}$ be the set of quartic forms $F(x,y) \in \mathbb{Z}[x,y]$ that are irreducible over $\mathbb{Q}$ with $I_F < 0$ and $J_F = 0$. Let $\mathfrak{N}$ be an upper bound for the number of solutions of quartic Thue equations
\[ \label{Fequation}
F(x,y) = 1
\]
as $F$ varies over the elements of $\mathfrak{G}$. Then for $h \in \mathbb{N}$ and $G(x,y) \in \mathfrak{G}$, the equation
\begin{equation} \label{Gequation}
G(x,y) = h
\end{equation}
has at most
\[ 
\mathfrak{N}4^{\omega(h)}
\]
primitive solutions.
\end{proposition}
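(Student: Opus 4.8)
The plan is to prove this via a congruence/reduction argument in the style of Bombieri and Schmidt, the key idea being that every primitive solution $(x_0, y_0)$ to $G(x,y) = h$ determines a residue class modulo $h$ (a root of $G$ regarded as a polynomial over $\mathbb{Z}/h\mathbb{Z}$), and that solutions falling into the same residue class can be packaged, after a unimodular change of variables, into solutions of an equation $F(x,y) = 1$ for a single transformed form $F$. The factor $4^{\omega(h)}$ will arise from counting the relevant residue classes.

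First I would set up the reduction for prime power moduli. Given a primitive solution $(x_0, y_0)$ to $G(x,y) = h$, primitivity gives $\gcd(x_0, y_0) = 1$, so one can choose $p, q \in \mathbb{Z}$ with $x_0 q - y_0 p = 1$, yielding a matrix $M = \left(\begin{smallmatrix} x_0 & p \\ y_0 & q \end{smallmatrix}\right) \in \mathrm{GL}_2(\mathbb{Z})$. The transformed form $G_M(x,y) := G(Mx, My)$ then has leading coefficient $G(x_0, y_0) = h$, and crucially the transformed form has the same invariants $I$ and $J$ (hence is still diagonalizable with $I < 0$, $J = 0$, remaining in $\mathfrak{G}$) up to the $\mathrm{SL}_2$-invariance discussed in the introduction. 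The standard observation is that the value $G(x,y) \bmod h$ for a primitive pair depends only on the class of $y/x$ (equivalently $x/y$) modulo $h$, and that $G$ has at most $\deg = 4$ roots modulo each prime $p$; one shows that the number of admissible residue classes modulo $h$ is at most $4^{\omega(h)}$ by the Chinese Remainder Theorem applied across the prime factorization of $h$, using the bound $\rho(p^a) \le \rho(p) \le 4$ on the number of roots mod each prime power (so the key input is that a quartic has at most four roots mod $p$, and that roots do not proliferate when lifting to $p^a$ under the coprimality constraints relevant here).

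Next I would show that within a fixed residue class, after applying a suitable $M \in \mathrm{GL}_2(\mathbb{Z})$ and dividing out the factor $h$, each solution in that class corresponds to a solution of $F(x,y) = 1$ where $F$ is the reduced integral form obtained from $G_M$ by scaling. Concretely, if two primitive solutions lie in the same residue class, the change of variables built from one of them sends $G(x,y) = h$ to an equation whose form $\tilde{G}$ satisfies $\tilde{G} \equiv (\text{leading term}) \cdot h \pmod{h}$ in a way that lets one factor out $h$ and land on a form $F \in \mathfrak{G}$ with $F(x,y) = 1$; by hypothesis each such $F(x,y) = 1$ has at most $\mathfrak{N}$ primitive solutions. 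Multiplying the at most $4^{\omega(h)}$ classes by the at most $\mathfrak{N}$ solutions per class yields the bound $\mathfrak{N} \cdot 4^{\omega(h)}$.

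The main obstacle I anticipate is the bookkeeping at prime powers dividing $h$: one must verify carefully that the number of relevant residue classes mod $p^a$ is bounded by $4$ rather than growing with $a$, which requires controlling how roots of $G$ mod $p$ lift to mod $p^a$ under the primitivity (coprimality) hypothesis, and ensuring that the transformation applied to each class genuinely produces an integral form in $\mathfrak{G}$ with the correct invariants and normalized constant term equal to $1$. The invariance of $I_F < 0$ and $J_F = 0$ under $\mathrm{GL}_2(\mathbb{Z})$ is essential here so that the transformed forms remain in the class $\mathfrak{G}$ to which the bound $\mathfrak{N}$ applies; I would invoke the $\mathrm{SL}_2$-invariance of $I$ and $J$ established in the introduction, together with the observation that the $\mathrm{GL}_2 \setminus \mathrm{SL}_2$ action at worst changes signs of weight-$6$ invariants, leaving $J = 0$ and the sign of $I$ intact.
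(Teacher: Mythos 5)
Your proposal re-derives the Bombieri--Schmidt reduction itself, whereas the paper simply invokes Lemma~7 of \cite{Bombieri1987} and devotes its proof entirely to the one point the citation does not cover: that the forms produced by the reduction remain in $\mathfrak{G}$ (diagonalizability gives $J=0$, and the M\"obius action on roots preserves the number of real roots, hence the sign of $\Delta$ and of $I$). Your closing paragraph addresses that same point, correctly in substance, so the two arguments agree on what actually needs checking here.

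However, two steps of your sketch of the reduction would fail as written. First, the substitution matrices in the reduction are not unimodular. With $M \in \mathrm{GL}_2(\mathbb{Z})$ built from a primitive solution, the transformed form $G_M$ has leading coefficient $h$, but its remaining coefficients need not be divisible by $h$, so one cannot ``divide out the factor $h$'' at that stage. The correct construction composes $M$ with the further substitution $y \mapsto hy$ (a matrix of total determinant $h$), which produces an integral form after division by $h$ because every solution in the same residue class pulls back under $M^{-1}$ to a pair whose second coordinate is divisible by $h$. Consequently $I$ and $J$ are not preserved up to sign as you claim: they scale by $h^{2}$ and $h^{3}$ respectively, which still gives $J=0$ and $I<0$, but the justification via $\mathrm{SL}_2$-invariance is not the right one. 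Second, the asserted bound $\rho(p^a) \le \rho(p) \le 4$ on roots modulo prime powers is false in general: a root of $G(x,1)$ modulo $p$ at which the reduction has a repeated factor can lift to many residues modulo $p^a$. Bombieri and Schmidt obtain at most $\deg G$ classes per prime not by a naive Hensel count but by a more delicate partition into classes defined by congruences to varying prime-power moduli, each with its own determinant-$p^{a_i}$ substitution. These are precisely the technicalities the citation handles; if you intend to reprove the lemma rather than cite it, both points need genuine arguments.
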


\begin{proof}
This is a special case of {\cite[Lemma 7]{Bombieri1987}}. The equation~\eqref{Gequation} is reduced to some equations of the form~\eqref{Gequation} by reducing $G(x,y)$ through the action of some matrices from $\text{GL}_2(\mathbb{Z})$. These new forms will have $J_F = 0$ because applying this action to a diagonalized form clearly yields a diagonalized form. Furthermore, the matrix 
\[
\begin{pmatrix} a & b \\ c & d \end{pmatrix}
\]
will act on a root $\alpha$ by
\[
\alpha \mapsto \frac{a \cdot \alpha + b}{c \cdot \alpha + d}.
\]
From this it is clear that real roots will map to real roots and nonreal roots will map to nonreal roots. Thus these new forms will also have $I < 0$. \end{proof}

\section{Proof of Theorem~\ref{EllipticCurveResult}}

\emph{Proof of Theorem~\ref{EllipticCurveResult}.} Tracing back through our reduction of the elliptic curve, the number of integral points on~\eqref{ellipticcurve} is at most
\[
\sum_{d|N} U_d,
\]
where $U_d$ is an upper bound for the number of solutions to the Thue equation~\eqref{1reduced}. Every two of these solutions is derived from one solution to~\eqref{2reduced} as $Y$ is squared. The solutions to~\eqref{2reduced} split into classes of solutions. As $k$ is square-free, Walsh showed in \cite{Walsh2009} that there are at most $2^{\omega(k)}$ such classes. The number of solutions in each class is the number of solutions to the quartic Thue equation~\eqref{reducedtothue}, which is at most $8 \cdot 4^{\omega\left(P^2t^2/Q^2\right)}$, applying Theorem \ref{EqualhResult}. Akhtari in \cite{Akhtari2015} gives the following upper bound for $\omega(P^2t^2/Q^2)$ (see the proof of Corollary 5.1):
\[
\omega\left(\frac{P^2t^2}{Q^2}\right) \leq 2 + \frac{\log\left(\epsilon_d^{3/2}\sqrt{|K|/2d}\right)}{\log 4}
\]
where $K = N/d$. Hence it follows that~\eqref{ellipticcurve} has at most
\[
\sum_{d|N} U_d \leq 2^{15/2}\sqrt{N}\sum_{d|N}\frac{2^{\omega(N/d)}\epsilon_d^{3/2}}{d}
\]
integral points. \hfill $\square$ \\

\section{Acknowledgments}
The author would like to thank Professor Shabnam Akhtari for her support in this work and previous results in this field. The author would like to thank Professor Ben Young for his advice on using Sage. The author would also like to thank the anonymous referee for their thorough reading and suggestions, which improved the presentation of this article.

\newpage
\bibliographystyle{siam}
\bibliography{QuarticEquations_AARevision}

\begin{thebibliography}{10}

\bibitem{Akhtari2010}
{\sc S.~Akhtari}, {\em The method of thue-siegel for binary quartic forms},
  Acta Arithmetica, 141 (2010), pp.~1--31.

\bibitem{Akhtariellptic}
\leavevmode\vrule height 2pt depth -1.6pt width 23pt, {\em Integral points on a
  certain family of elliptic curves}, J. Th\'eor. Nombres Bordeaux, 27 (2015),
  pp.~353--373.

\bibitem{Akhtari2015}
{\sc S.~Akhtari, N.~Saradha, and D.~Sharma}, {\em Thue's inequalities and the
  hypergeometric method}, Ramanujan J., 45 (2018), pp.~521--567.

\bibitem{Bennett1998}
{\sc M.~A. Bennett and B.~M.~M. De~Weger}, {\em On the diophantine equation
  $|ax^n - by^n| = 1$}, Mathematics of Computation, 67 (1998), pp.~413--438.

\bibitem{BG2015}
{\sc M.~A. Bennett and A.~Ghadermarzi}, {\em {M}ordell's equation: a classical
  approach}, LMS Journal of Computation and Mathematics, 18 (2015),
  pp.~633--646.

\bibitem{BSD63}
{\sc B.~Birch and H.~Swinnerton-Dyer}, {\em Notes on elliptic curves. i.},
  Journal für die reine und angewandte Mathematik, 212 (1963), pp.~7--25.

\bibitem{Bombieri1987}
{\sc E.~Bombieri and W.~M. Schmidt}, {\em On {T}hue's equation.}, Inventiones
  mathematicae, 88 (1987), pp.~69--82.

\bibitem{Cremona1999}
{\sc J.~E. Cremona}, {\em Reduction of binary cubic and quartic forms}, LMS J.
  Comput. Math., 2 (1999), pp.~64--94.

\bibitem{Olver2003}
{\sc P.~J. Olver}, {\em Classical invariant theory}, Cambridge University
  Press, 2003.

\bibitem{Siegel1937}
{\sc C.~L. Siegel}, {\em Die {G}leichung $ax^n - by^n = c$}, Mathematische
  Annalen, 114 (1937), pp.~57--68.

\bibitem{Siegel1970}
\leavevmode\vrule height 2pt depth -1.6pt width 23pt, {\em Einige
  {E}rl\"auterungen zu {T}hues {U}ntersuchungen \"uber {A}nn\"aherungs-werte
  algebraischer {Z}ahlen und diophantische {G}leichungen}, Nachr. Akad. Wiss.
  G\"ottingen Math.-Phys. Kl. II,  (1970), pp.~169--195.

\bibitem{Thue1909}
{\sc A.~Thue}, {\em Über annäherungswerte algebraischer zahlen.}, Journal
  für die reine und angewandte Mathematik, 135 (1909), pp.~284--305.

\bibitem{Thue1918}
\leavevmode\vrule height 2pt depth -1.6pt width 23pt, {\em Berechnung aller
  {L}\"{o}sungen gewisser {G}leichungen von der form $ax^r - by^r = f$.}, Vid.
  Skrifter I Mat.-Naturv. Klasse,  (1918), pp.~1--9.

\bibitem{Tzanakis1986}
{\sc N.~Tzanakis}, {\em On the diophantine equation $x^2 - dy^4 = k$}, Acta
  Arithmetica, 46 (1986), pp.~257--269.

\bibitem{Wakabayashi1997}
{\sc I.~Wakabayashi}, {\em On a family of quartic {T}hue inequalities. {I}}, J.
  Number Theory, 66 (1997), pp.~70--84.

\bibitem{Wakabayashi2000}
\leavevmode\vrule height 2pt depth -1.6pt width 23pt, {\em On a family of
  quartic {T}hue inequalities. {II}}, J. Number Theory, 80 (2000), pp.~60--88.

\bibitem{Walsh2009}
{\sc P.~G. Walsh}, {\em On the number of large integer points on elliptic
  curves}, Acta Arithmetica, 138 (2009), pp.~317--327.

\end{thebibliography}

\end{document}